\documentclass[9pt,shortpaper,twoside,web]{ieeecolor}
\usepackage{times} 
\usepackage{epsfig,graphicx,generic,booktabs,subfigure,cite,textcomp}
\usepackage{pgf,algpseudocode,algorithm,caption,bm}
\usepackage{amsmath,amssymb,mathrsfs,amsfonts}
\newtheorem{lemma}{Lemma}
\newtheorem{theorem}{Theorem}

\newtheorem{remark}{Remark}
\newtheorem{definition}{Definition}
\newtheorem{example}{Example}

\newtheorem{assumption}{Assumption}


\title{Achieving Social Optimum in Non-convex Cooperative Aggregative Games: A Distributed Stochastic Annealing Approach}
\author{Yinghui~Wang, Xiaoxue~Geng $^{*}$\thanks{$*$ Corresponding Author}, Guanpu Chen, and Wenxiao Zhao
\thanks{Y. Wang is with  School of Automation and Electrical Engineering, University of Science
and Technology Beijing, Beijing 100083, China e-mail: (wangyinghuisdu@163.com).}
\thanks{X. Geng, G. Chen and W. Zhao are with Key Laboratory of Systems and Control, Academy of Mathematics and Systems Science, Chinese Academy of Sciences, Beijing 100190, China and also with School of Mathematical Sciences, University of Chinese Academy of Sciences, Beijing 100049, China. (email: gengxiaoxue17@mails.ucas.ac.cn; chengp@amss.ac.cn; wxzhao@amss.ac.cn)}
\thanks{This research was supported by NSFC (72101026, 61621063) and the State Key Laboratory
of Intelligent Control and Decision of Complex Systems.}
}

\begin{document}
	
	\maketitle
	\thispagestyle{empty}
	\pagestyle{empty}
	
\begin{abstract}
This paper designs a distributed stochastic annealing algorithm for non-convex cooperative aggregative games, whose agents' cost functions not only depend on agents' own decision variables but also rely on the sum of agents' decision variables. To seek the the social optimum of cooperative aggregative games, a distributed stochastic annealing algorithm is proposed, where the local cost functions are non-convex and the communication topology between agents is time varying. The weak convergence to the social optimum of the algorithm is further analyzed. A numerical example is given to illustrate the effectiveness of the proposed algorithm.
\end{abstract}

\begin{IEEEkeywords}
cooperative aggregative game, social optimum, distributed stochastic algorithm, non-convex, annealing.
\end{IEEEkeywords}

\section{Introduction}
In the past decades, distributed games over networks has received great attention because of their wide range of applications in smart grids, public environments, and communication network\cite{barrera2014dynamic, cornes2016aggregative, ye2016game, yi2019operator, chen2021distributed, li2017off, zhang2018game}. Various kinds of networked games, including resource allocation games, aggregative games, two-network zero-sum games, mean-field games  and so on, have been studied. Most of these aforementioned distributed works  investigated noncooprate mechanism and explored Nash equilibria, where no agent can improve his own revenue  through unilaterally changing his strategy and maximize agents’ own revenue over the network.

However, the seeking of Nash equilibria can not maximize the interests of games over the whole network. In fact, social optimum, origned from \cite{green1961social} and of which Pareto optimum is a useful necessary condition \cite{heydenreich2007games}, seeks global optimum of the whole network. Still, for intrinsic interest or means of measuring the efficiency of different Nash equilibria, the social optimum has been widely studied in various situations, including resource allocation games \cite{johari2004efficiency, gkatzelis2016optimal, von2013optimal}, mean-field games \cite{nourian2012nash, wang2017social, li2019connections} and so on. Different from Nash equilibria,  the social optimum offers  cooperation mechanism for networked games. When 
seeking the social optimum of games, the decision made by each agent needs to consider the win-win cooperation over the network rather than the maximization of individual interests. 

Aggregative games have  attracted extensive attention among  cooperative game models.  Actually, coopererative aggregative games have multitude of practical applications and various examples,  e.g.,  task assignment problems \cite{marden2013distributed}, drivers allocation over transportation networks \cite{brown2017optimal} and demand side management in smart grids \cite{chen2014autonomous}.  Although sharing the same aggregative funtions, coopererative aggregative games seek the social optimum while traditionally non-coopererative aggregative games seek Nash equilibria. Compared with seeking Nash equilibria, agents are more blind and random in making decisions for seeking social optimum in coopererative aggregative games, which brings chanllenges to both the designs and analysis of distributed algorithms in cooperative aggregative games.  On one hand,  traditional distributed algorithms\cite{koshal2016distributed, deng2018distributed, belgioioso2020distributed}  seeking Nash equilibria of non-coopererative aggregative games are no longer suitable in cooperative situations.  On the other hand, the rarely designed distributed  gradient-tracking work  \cite{li2021distributed},  studied the linear convergence rates whose agents'  cost functions are  strong convex. However, the algorithm proposed in \cite{li2021distributed} cannot deal with more complex non-convex, constrained, and stochastic cooperative aggregative games. 
 
Moreover, we are interested in solving stochastic distributed cooperative aggregative games, because stochasticity play important roles in the study of distributed convex problems. Actually, many stochastic algorithms have been designed for solving distributed problems. The earlier studied case \cite{sundhar2010distributed, yuan2020stochastic, wang2019distributed} is to minimize distributed unconstrainted and constrainted problems, whose global function is separable and composed of local convex and strongly convex functions. The more complex case is to study Nash equilibriua of uncoopererative games, whose agents' local functions depend on agents' own decision variables but also other agents' decision variables.  For example, in \cite{lei2020synchronous}, algorithms were designed for the best-response schemes of uncooperative  stochastic games while in \cite{meigs2019learning, shokri2020leader} were designed for uncooperative aggregative games. The aforementioned stochastic algorithms were mainly designed for solving problems with uncertain function information or communication topology between agents. 

Meanwhile, stochastic metheds are  also efficient for solving non-convex problems. Distributed stochastic  gradient algorithms (DSGD)
\cite{tatarenko2017non, vlaski2021distributed} were proposed for seeking local optima of non-convex problems. Still, although the more complex game situations have not been studied, stochastic annealing  algorithms \cite{swenson2019annealing} were designed to find the global solution (corresponding to the social optimum in games) of distributed unconstrainted problems. Compared with distributed stochastic  gradient algorithms (DSGD), the additional greedy factors given in distributed stochastic annealing algorithms \cite{swenson2019annealing} established escape from local optima in probability.

The above facts motivated this paper to study social optimum of non-convex cooperative aggregative games. Challenges mainly come from the more complex non-convex coopereative game setting, for which  \cite{meigs2019learning, shokri2020leader}  is not suitable. As mentioned before, stochastic annealing algorithms \cite{gelfand1991recursive, swenson2019annealing} are efficient to deal with non-convex functions. Therefore, we design a distributed stochastic annealing algorithm to deal with non-convex functions, followed by the convergence analysis. The contributions of this paper are summarized as follows. 

\begin{itemize}
\item[(a)]
We consider the seeking of the social optimum for non-convex cooperative aggregative games. Compared with the existing works\cite{koshal2016distributed, deng2018distributed, belgioioso2020distributed} designed for non-convex aggregative games, we seek the social optimum rather than Nash equilibria in games.  Compared with  \cite{li2021distributed},  we deal with the more complex non-convex and  stochastic aggregative games. The study of the social optimum in this paper extends the applications of stochastic algorithms in cooperative games.

\item[(b)]For non-convex cooperative aggregative games, we design a distributed annealing algorithm to seek social optimum. With Ito integral and  martingale theory, we show the weak convergence of the proposed algorithm and provide an electric vehicles example  to support the effectiveness of the proposed algorithm. The weak convergence of the proposed algorithm extends the applications of annealing algorithms \cite{gelfand1991recursive, swenson2019annealing} in distributed game settings. 
\end{itemize}

The rest of this paper is organized as follows. Preliminaries and our problem formulation are given in Section \ref{sec2}. In Section \ref{sec3}, a distributed annealing algorithm is introduced. The proposed algorithm is further analyzed in Section \ref{sec4} and a numerical example is presented in Section \ref{sec5}. Finally, the conclusion of this paper is given in Section \ref{sec6}.
\section{Preliminaries}\label{sec2}

In this section, mathematical preliminaries about probability theory and graph theory are first introduced. Then, the social optimum for aggregative games is formulated.

\subsection{Preliminaries}
Denote $(\Omega,\mathcal{F}, \mathbb{P})$ as the basic probability space, where $\Omega$ is the whole event space, $\mathcal{F}$ is the $\sigma$-algebra on $\Omega$, and $\mathbb{P}$ is the probability measure on $(\Omega,\mathcal{F})$.  Define $\mathcal{H}^{k}$ as a sequence of sub-$\sigma$-algebra on  $\mathcal{F}$.  Next, we give definitions of weak convergence in probability theory \cite{durrett2010probability}.

\begin{definition}
$X_{k}\Rightarrow X$ ($X_{k}$ weakly converges to $X$) if for any bounded continuous function $f$, we have $\mathbb{E}f\big(X_{k}\big)\rightarrow \mathbb{E}f\big(X\big)$.
\end{definition}

Then, a lemma of the convergence of nonnegative adapted process, which is useful in the proof of algorithm, is given.

\begin{lemma}\cite[Lemma 4.3]{kar2013distributed}\label{Lem:martingale}
Let $\big\{ \bm{z^{k}}\big\}$ be a nonnegative $\big\{\mathcal{H}^{k}\big\}$ adapted process and
\begin{align}\label{eq:martingale}
\bm{z^{k+1}}\leqslant \big(1-p^{k}\big)\bm{z^{k}}+q^{k}V^{k}(1+ O^{k}).
\end{align}
In \eqref{eq:martingale}, $\big\{ p^{k}\big\}$ is adaptive to $\big\{\mathcal{H}^{k+1}\big\}$ such that for all $k$, $p^{k}$ satisfies $0\leqslant p^{k}\leqslant 1$ and
\begin{align*}
\frac{a_{1}}{(k+1)^{\delta_{1}}}\leqslant \mathbb{E}\Big[p^{k}| \mathcal{H}^{k} \Big]\leqslant 1.
\end{align*}
 $\big\{ q^{k}\big\}$ satisfies $q^{k}\leqslant \frac{a_{2}}{(k+1)^{\delta_{2}}}$ with $a_{2},\delta_{2}>0$. Further, let $\big\{ V^{k}\big\}$ and $\big\{ O^{k}\big\}$ be $\mathcal{R}_{+}$ valued and adapted to $\big\{\mathcal{H}^{k+1}\big\}$. $\sup_{k\geqslant 0}\big\|V^{k}\big\|<\infty$ a.s. $\big\{ O^{k}\big\}$ is i.i.d. and independent of $\big\{\mathcal{H}^{k}\big\}$ with $\mathbb{E}\Big[ \big\|O^{k}\big\|^{2+\epsilon_{1}}\Big]<\kappa<\infty$ for some $\epsilon_{1}>0$ and constant $\kappa>0$. Then, for every $\delta_{0}$ such that
 \begin{align*}
 0\leqslant \delta_{0}<\delta_{2}-\delta_{1}-\frac{1}{2+\epsilon_{1}},
 \end{align*}
 we have
 \begin{align*}
 \lim_{k\rightarrow\infty }(k+1)^{\delta_{0}}\bm{z}^{k}=0,\quad  \mbox{a.s.}.
 \end{align*}
\end{lemma}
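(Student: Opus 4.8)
The statement is a convergence-rate lemma of stochastic-approximation type (it appears as \cite[Lemma 4.3]{kar2013distributed}), and I would establish it with the standard template: rescale to the target rate, localize so the forcing term is bounded, split the forcing into a decaying ``drift'' and a martingale-difference ``noise'', extract a genuine contraction from the lower bound on $\mathbb{E}[p^k\mid\mathcal{H}^k]$, and finally upgrade an $L^{2+\epsilon_1}$ estimate on the rescaled iterate to an almost-sure statement via Markov's inequality and the Borel--Cantelli lemma. Concretely, fix $\delta_0$ in the stated range, put $\bm{w}^k:=(k+1)^{\delta_0}\bm{z}^k$, and multiply \eqref{eq:martingale} by $(k+2)^{\delta_0}$; using $(k+2)^{\delta_0}\leqslant(k+1)^{\delta_0}\big(1+\tfrac{c_0}{k+1}\big)$ for a constant $c_0=c_0(\delta_0)$ and the a.s.\ bound $\sup_k\|V^k\|<\infty$ (which, by stopping the iteration the first time $\|V^k\|$ exceeds a deterministic level $M$ and letting $M\to\infty$, lets me assume $\|V^k\|\leqslant M$ a.s.), and writing $O^k=m+N^k$ with $m:=\mathbb{E}O^k$ finite and $N^k$ i.i.d.\ zero-mean, I obtain a recursion of the form
\begin{align*}
\bm{w}^{k+1}\leqslant\Big(1+\tfrac{c_0}{k+1}\Big)(1-p^k)\,\bm{w}^k+\beta^k+\eta^k,
\end{align*}
where $0\leqslant\beta^k\leqslant C(k+1)^{\delta_0-\delta_2}$ (collecting the $(1+m)$-term and the conditional mean $\mathbb{E}[V^kN^k\mid\mathcal{H}^k]$, which is bounded because $N^k\perp\mathcal{H}^k$), and $\eta^k$ is a $\{\mathcal{H}^{k+1}\}$-martingale difference with $\mathbb{E}\big[\|\eta^k\|^{2+\epsilon_1}\mid\mathcal{H}^k\big]\leqslant C(k+1)^{(2+\epsilon_1)(\delta_0-\delta_2)}$.

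Next I would treat the drift. Conditioning on $\mathcal{H}^k$ and invoking $\mathbb{E}[p^k\mid\mathcal{H}^k]\geqslant a_1(k+1)^{-\delta_1}$ together with $\delta_1\leqslant1$ (so that $(k+1)^{-\delta_1}$ eventually dominates $(k+1)^{-1}$), one gets $\big(1+\tfrac{c_0}{k+1}\big)\big(1-\mathbb{E}[p^k\mid\mathcal{H}^k]\big)\leqslant1-\tfrac{a_1}{2}(k+1)^{-\delta_1}$ for all large $k$. Unrolling the recursion and using the tower property, the transition factors $\Phi_{k,j}:=\prod_{i=j}^{k-1}\big(1+\tfrac{c_0}{i+1}\big)(1-p^i)$ satisfy $\mathbb{E}[\Phi_{k,j}]\leqslant C\exp\!\big(-c'(k^{1-\delta_1}-j^{1-\delta_1})\big)$, i.e.\ the effective memory length is of order $k^{\delta_1}$; summing $\mathbb{E}[\Phi_{k,j+1}]\beta^j$ against this exponentially-decaying kernel yields a drift contribution of order $(k+1)^{\delta_1+\delta_0-\delta_2}$, which tends to $0$ precisely because $\delta_0<\delta_2-\delta_1$. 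In particular this already gives $\mathbb{E}[\bm{w}^k]\to0$.

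The delicate part is upgrading this to $\bm{w}^k\to0$ a.s., which is governed by the noise term $\sum_{j}\Phi_{k,j+1}\eta^j$. Here the obstacle is that $\Phi_{k,j+1}$ involves ``future'' randomness relative to $\eta^j$, so the sum is not literally a martingale; I would handle this either by Abel summation against the martingale $T_k:=\sum_{j<k}\eta^j$ together with a uniform-in-$k$ bound on $\sum_j|\Phi_{k,j+1}-\Phi_{k,j}|$, or by carrying the $L^{2+\epsilon_1}$ estimate recursively on $\bm{w}^k$ itself. Either route, combined with a Burkholder/Rosenthal inequality, the moment bound on $\eta^j$, and the exponential decay of the transition kernel, produces $\mathbb{E}\big[(\bm{w}^k)^{2+\epsilon_1}\big]\leqslant C(k+1)^{-(2+\epsilon_1)(\delta_2-\delta_1-\delta_0)}$. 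Markov's inequality then gives $\mathbb{P}(\bm{w}^k>\varepsilon)\leqslant C\varepsilon^{-(2+\epsilon_1)}(k+1)^{-(2+\epsilon_1)(\delta_2-\delta_1-\delta_0)}$, and this series is summable exactly when $(2+\epsilon_1)(\delta_2-\delta_1-\delta_0)>1$, i.e.\ under the hypothesis $\delta_0<\delta_2-\delta_1-\tfrac{1}{2+\epsilon_1}$; Borel--Cantelli then yields $\bm{w}^k\to0$ a.s., which is the claim. The exponent bookkeeping in this last moment estimate --- confirming that every power of $k+1$ produced by the Rosenthal inequality and the kernel sum is dominated by $(k+1)^{-(2+\epsilon_1)(\delta_2-\delta_1-\delta_0)}$, and that the non-adaptedness of $\Phi_{k,j+1}$ costs nothing --- is where I expect essentially all of the work to lie.
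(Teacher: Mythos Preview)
The paper does not supply its own proof of this lemma: it is stated verbatim as a citation of \cite[Lemma~4.3]{kar2013distributed} and used as a black box in the proof of Theorem~\ref{The:consensus}. There is therefore nothing in the present paper to compare your argument against.

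Your sketch is the standard route for such rate lemmas (rescale, localize $V^k$, split the forcing into a decaying drift and a martingale-difference part, extract a polynomial contraction from the lower bound on $\mathbb{E}[p^k\mid\mathcal{H}^k]$, then push an $L^{2+\epsilon_1}$ bound through Markov and Borel--Cantelli), and it matches in outline the argument in the cited Kar--Moura paper. Two small caveats: you silently use $\delta_1\leqslant 1$ to absorb the $(k+1)^{-1}$ rescaling term into the contraction, which is not part of the hypotheses as stated here (though it is implicit in the way the lemma is applied); and the step you flag as ``delicate'' --- the non-adaptedness of the transition factor $\Phi_{k,j+1}$ relative to $\eta^j$ --- is indeed where the cited proof does the real work, so your acknowledgment that this is the crux is accurate rather than a gap.
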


The communication topology between agents is modeled by a sequence of undirected time-varying networks $\mathcal{G}^{k}=(\mathcal{N},
\mathcal{E}^{k}),~k\geqslant 1$,  where $\mathcal{N} = \{1,2,...,n\}$ is the agent set, $k$ is the time index,  $\mathcal{E}^{k}\subset \mathcal{N}\times \mathcal{N} $ is the edge set at time $k$ which represents the link structure among agents. $W^{k}=[w_{ij}^{k}]_{i,j=1,\cdots,n}$ is the adjacency matrix, which describes the information communication protocol of $\mathcal{E}^{k}$, where $w_{ij}^{k}$ denotes the $ij$-th entry of matrix $W^{k}$.
\begin{align*}
w_{ij}^{k}=
\begin{cases}
1, \quad \mbox{if} \quad (i,j)\in\mathcal{E}^{k}\\
0,\quad \mbox{otherwise.}
\end{cases}
\end{align*}
The neighbours of agent $i$  at time $k$ is denoted by
\begin{align*}
N_{i}^{k}=\big\{ j\in\mathcal{N} |(j,i)\in \mathcal{E}^{k}\big\}.
\end{align*}
Agent $i$ has degree $d_{i}^{k}=|N_{i}^{k}|$ at time $k$. Define the degree matrix at time $k$ as the diagonal matrix $D^{k}=diag \big(d_{1}^{k},d_{2}^{k},\ldots,d_{n}^{k}\big)$. Define the Laplacian matrix at time $k$ as the positive semidefinite matrix $L^{k}=D^{k}-W^{k}$. The eigenvalues of Laplacian matrix $L^{k}$ can be ordered as $0=\lambda_{1}\big( L^{k}\big)\leqslant \lambda_{2}\big( L^{k}\big)\leqslant \ldots \leqslant \lambda_{n}\big( L^{k}\big)$, where the eigenvector corresponding to $\lambda_{1}\big( L^{k}\big)$ being $\Big(\frac{1}{\sqrt{n}}\Big)\bm{1}_{n}$. For any connected graph, we have $\lambda_{2}(L)>0$. The following assumption holds for the communication topology between agents.
\begin{assumption}\label{Ass:graph}\quad
\begin{itemize}
\item[(a)]The Laplacian matrices $L^{k}$ are independent and identically distributed (i.i.d.), with $L^{k}$ being independent of $\mathcal{H}^{k}$.
\item[(b)]The Laplacian matrices $L^{k}$  are connected in expectation, i,e. $\lambda_{2}(\bar{L})>0$ where $\bar{L}=\mathbb{E}(L^{k})$.
\end{itemize}
\end{assumption}
It should be noted that most communication topologies given in distributed algorithm designs, including connected graphs \cite{lu2020distributed}, time-varying strongly connected graphs \cite{wang2019distributed}, random networks\cite{kar2011convergence},  satisfy Assumption \ref{Ass:graph}.

\subsection{Problem Formulation}

Consider a set of $n$ agents indexed by $\{1, 2,\ldots, n\}$. The $i$-th player has a cost function $g_{i}(x_{i}, \bar{x})$, which depends on player $i$'s decision $x_i$ and the aggregate $\bar{x}$ of all players' decisions, i.e., $\bar{x}:=\frac{x_{1}+\ldots +x_{n}}{n}$. In an aggregative game, agent $i$ faces the following problem:
\begin{align}\label{eq:Problem}
\min_{x_{i}^{k}\in\mathcal{R}^{d}} g_{i}(x_{i},\bar{x})
\end{align}
 In problem \eqref{eq:Problem}, each agent $i$ only has privately access to  local  function $g_{i}$. Also, at time $k$, agent $i$ only aware the decision variables of its neighbours  $x_{j}$'s in $N_{i}^{k}$. In an cooperative aggregative model, the social optimum of problem \eqref{eq:Problem}, whose definition is given as follows, is investigated.
\begin{definition}
Define $\bm{x}=(x_1,\ldots, x_{n})^{\top}$, $\bm{x}_{-i}=(x_{-1},\ldots, x_{-n})^{\top}$ and $U(\bm{x}^{*},\bm{x}_{-i})=\sum_{i=1}^{n}U_{i}(x_{i},x_{-i})$. An strategy $\bm{x}^{*}=(x_{1}^{*},\ldots, x_{n}^{*})^{\top}$ is called a
social optimum if for all agents $i\in\mathcal{N}$,
$$U(\bm{x}^{*},\bm{x}_{-i}^{*})\leqslant U(\bm{x},\bm{x}_{-i}).$$
\end{definition}
\begin{remark}
In cooperative games, the study of the social optimum\cite{orbell1993social} focus on the global interest over the network, which 
helps rule makers (often the government) of games further improve the resource allocation. Social optimum also plays impotant roles in non-cooperative games. When there are several Nash equilibria in games, the seeking of social optimum is necessary to judge different Nash equilibria \cite{roughgarden2010algorithmic}.
\end{remark}
\begin{example} Consider a 2-agents game, where
\begin{align*}
\begin{cases}
f_{1}(x)=(x_{1}-2)^{2}+\frac{(x_{1}+x_{2})^{2}}{2},\\
f_{2}(x)=(x_{2}-3)^{2}+\frac{(x_{1}+x_{2})^{2}}{2}.
\end{cases}
\end{align*}
The Nash equilibrium for the 2-agents aggregative game is $\big(\frac{3}{4}, \frac{7}{4}\big)$ with $f(\bm{x}^{NE})=\frac{75}{8}$, while the social optimum of Problem \eqref{eq:Problem} is $\big(\frac{1}{3}, \frac{4}{3}\big)$ with  $f(\bm{x}^{SO})=\frac{75}{9}$.  The social optimum of Problem \eqref{eq:Problem} for the 2-agents aggregative game is more efficient than the  Nash equilibrium for the whole network.
\end{example}

The following assumption holds for local objective functions given in Problem \eqref{eq:Problem}.
\begin{assumption}\label{Ass:function}\quad
\begin{itemize}
\item[(a)]The partial gradient functions $\nabla_{1} g_{i}(x,y)$ and  $\nabla_{2} g_{i}(x,y)$ are Lipschitz continuous  with respect to $x$, i.e. there exists $L>0$ such that
\begin{align}\label{Ass:function 1}
\big\|\nabla_{1} g_{i}(x_{1},y)-\nabla_{1} g_{i}(x_{2},y)\big\|\leqslant L\big\|x_{1}-x_{2}\big\|,\notag\\
\big\|\nabla_{2} g_{i}(x_{1},y)-\nabla_{2} g_{i}(x_{2},y)\big\|\leqslant L\big\|x_{1}-x_{2}\big\|.
\end{align}
\item[(b)]The partial gradient functions $\nabla_{1} g_{i}(x,y)$ and  $\nabla_{2} g_{i}(x,y)$ are Lipschitz continuous  with respect to $y$, i.e. there exists $L>0$ such that
\begin{align}\label{Ass:function 2}
\big\|\nabla_{1} g_{i}(x,y_{1})-\nabla_{1} g_{i}(x,y_{2})\big\|\leqslant L\big\|y_{1}-y_{2}\big\|,\notag\\
\big\|\nabla_{2} g_{i}(x,y_{1})-\nabla_{2} g_{i}(x,y_{2})\big\|\leqslant L\big\|y_{1}-y_{2}\big\|.
\end{align}
\item[(c)]The partial gradient functions $\nabla_{1} g_{i}(x,y)$ and  $\nabla_{2} g_{i}(x,y)$ satisfy the following bounded gradient-dissimilarity condition:
\begin{align}\label{Ass:function 3}
\sup_{x,y\in\mathcal{R}^{d}}\big\|\nabla_{1} g_{i}(x,y)-\nabla_{1} G(x,y)\big\|<\infty,\; \forall i,\notag\\
\sup_{x,y\in\mathcal{R}^{d}}\big\|\nabla_{2} g_{i}(x,y)-\nabla_{2} G(x,y)\big\|<\infty,\; \forall i,
\end{align}
where $G(x,y)$ is a common function.
\end{itemize}
\end{assumption}

Different from the strong convexity assumption with respect to $x$ given in \cite{li2021distributed}, we do not require the convexity of local cost functions $g_{i}$s in this paper. Assumption \ref{Ass:function} requires  the boundedness of partial gradient functions $\nabla_{1} g_{i}(x,y)$ and  $\nabla_{2} g_{i}(x,y)$, which is realistic and common  in rule making for games.

\section{Algorithm Design}\label{sec3}
In this section, we design a  distributed annealing algorithm to seek the social optimum of  aggregative games.

\subsection{Algorithm Design}
The distributed annealing algorithm is presented in Algorithm \ref{Alg1}.
\begin{algorithm}[h]
\flushleft
\caption{ \bf Distributed annealing algorithm }
\label{Alg1}
\hspace*{0.02in}
{\bf Initialize:} $x_{i}(0)\in X$ for all $i=1,2,\ldots n$, stepsize sequences $\big\{\alpha^{k}\big\}$, $\big\{\beta^{k}\big\}$, $\big\{\gamma^{k}\big\}$, and noises sequences $\big\{\iota_{i}^{k}\big\}$, $\varsigma_{i}^{k} $.
\begin{algorithmic}[1]
\For{$k=0,\ldots T$}
\begin{align}
s_{i}^{k}\quad=&v_{i}^{k}-\beta^{k}\Big[\sum_{j=1}^{n}a_{ij}^{k}\big(v_{i}^{k}-v_{j}^{k}\big)\Big],\label{Alg1-1}\\
x_{i}^{k+1}=&x_{i}^{k}-\alpha^{k}\Big[ \nabla_{x_{i}^{k}} g_{i}\big(x_{i}^{k},s_{i}^{k}\big)+\varsigma_{i}^{k} \Big]+\gamma^{k}\iota_{i}^{k},\label{Alg1-2}\\
v_{i}^{k+1}=&s_{i}^{k}+x_{i}^{k+1}-x_{i}^{k},\label{Alg1-3}
\end{align}
  \EndFor
\end{algorithmic}
where $\nabla_{x_{i}^{k}} g_{i}\big(x_{i}^{k},s_{i}^{k}\big)=\nabla_{1} g_{i}\big(x_{i}^{k},s_{i}^{k}\big)+\frac{1}{n}\nabla_{2} g_{i}\big(x_{i}^{k},s_{i}^{k}).$
\end{algorithm}

In Algorithm \ref{Alg1}, $s_{i}^{k}$ is introduced for agent
$i$ to track $\bar{x}$, which is unavailable for agent $i$. Our algorithm is an ``annealing'' algorithm which is inspired by (distributed) annealing algorithm \cite{gelfand1991recursive, swenson2019annealing} for unconstrained optimization.  The noises $\varsigma_{i}^{k}$s allow partial gradients $\nabla_{x} g_{i}\big(x_{i}^{k},s_{i}^{k}\big)$ to be inexact, while  random variables $\iota_{i}^{k}$s are the cooperative factors in games. 
Still, with the distributed setting where agents could not share their stategies with the whole network,  the cooperative factors $\iota_{i}^{k}$s are randomly given. With Algorithm \ref{Alg1}, agents could cooperatively iterate to the social optimum of cooperative aggregative games.
\begin{remark}
A closely related work is the deterministic
algorithm in \cite{koshal2016distributed} for Nash equilibriums seeking of non-cooperative aggregative  games. For comparison, we write the  algorithm in \cite{koshal2016distributed} (referring as DAAG) here.
\begin{align}\label{Alg2-1}
\begin{cases}
\hat{s}_{i}^{k}\quad=&\hat{v}_{i}^{k}-\beta^{k}\Big[\sum_{j=1}^{n}a_{ij}^{k}\big(\hat{v}_{i}^{k}-\hat{v}_{j}^{k}\big)\Big],\\
\hat{x}_{i}^{k+1}=&\hat{x}_{i}^{k}-\alpha^{k}\nabla_{1} g_{i}\big(\hat{x}_{i}^{k},\hat{s}_{i}^{k}\big),\\
\hat{v}_{i}^{k+1}=&\hat{s}_{i}^{k}+\hat{x}_{i}^{k+1}-\hat{x}_{i}^{k}.
\end{cases}
\end{align}
Different from the  DAAG using $\nabla_{1} g_{i}\big(\hat{x}_{i}^{k},\hat{s}_{i}^{k}\big)$ in iterations of $\hat{x}_{i}^{k+1}$, Algorithm \ref{Alg1} make use of partial function $\nabla_{x} g_{i}\big(x_{i}^{k},s_{i}^{k}\big)$ with respect to $x$. Besides, noises $\varsigma_{i}^{k}$ and cooperative factors $\iota_{i}^{k}$ are also introduced to find social optimum for cooperative aggregative games.
\end{remark}

In addition, the step-size sequences $\big\{\alpha^{k}\big\}$, $\big\{\beta^{k}\big\}$ and $\big\{\gamma^{k}\big\}$ satisfy
\begin{align*}
\alpha^{k}=\frac{c_{\alpha}}{k}, \quad \beta^{k}=\frac{c_{\beta}}{k^{\tau_{\beta}}}, \;\mbox{and}\; \gamma^{k}=\frac{c_{\gamma}}{k^\frac{1}{2}\sqrt{\log\log k}}
\end{align*}
for large $k$, where $c_{\alpha}$, $c_{\beta}$ and $c_{\gamma}>0$ with $\tau_{\beta}\in \big(0, \frac{1}{2}\big)$.

The following conditions hold for  noises in Algorithm \ref{Alg1}.
\begin{itemize}
\item[(a)]Define ${\bm{\varsigma^k}}={\rm{col}}(\varsigma_1^k,\cdots,\varsigma_n^k)$ and ${\bm{\iota^k}}={\rm{col}}(\iota_1^k,\cdots,\iota_n^k)$. The sequence $\big\{\bm{\varsigma^{k}}\big\}$ is adapted to $\mathcal{H}^{k}=\sigma\big(\bm{x^{0}},\bm{v^{0}}, L^{0},\cdots, L^{k-1},\\\bm{\varsigma^{0}},\cdots,\bm{\varsigma^{k-1}},\bm{ \iota^{0}},\cdots, \bm{ \iota^{k-1}}\big)$, the $\sigma$-algebra corresponding to update process \eqref{Alg1-1}-\eqref{Alg1-3}, and there exists a  constant $C>0$ such that
\begin{align*}
\mathbb{E}\big[\bm{\varsigma^{k}} \big| \mathcal{H}^{k}\big]=0,
\end{align*}
and
\begin{align*}
\mathbb{E}\big[\big\| \bm{\varsigma^{k}}\big\|^{2} \big| \mathcal{H}^{k}\big]<C,
\end{align*}
for all $k\geqslant 0$.
\item[(b)]The sequence $\big\{\iota_{i}^{k}\big\}$is an i.i.d. sequence of $d$-dimensional following Gaussian distribution $N(\bm{0}, \bm{{I}_{d}})$. Further,  $\iota_{i}^{k}$ and $\iota_{j}^{k}$ are mutually independent for $i\neq j$.
\end{itemize}
The following lemma holds for gradient noise $\bm{\varsigma^{k}}$ in Algorithm \ref{Alg1}.
\begin{lemma}\label{Lem: gradient noise}
 For any $\delta>0$,  $\lim_{k\rightarrow \infty}\big(k+1\big)^{-\frac{1}{2}-\delta}\big\| \bm{\varsigma^{k}}\big\|=0$ holds for the gradient noise $\bm{\varsigma^{k}}$ in Algorithm \ref{Alg1} almost surely.
\end{lemma}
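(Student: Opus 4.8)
The plan is to prove this by a direct first Borel--Cantelli argument, using only the uniform second-moment bound on the gradient noise. First I would pass from the conditional estimate to an unconditional one: by the tower property of conditional expectation, $\mathbb{E}\big[\|\bm{\varsigma^{k}}\|^{2}\big]=\mathbb{E}\big[\mathbb{E}[\|\bm{\varsigma^{k}}\|^{2}\mid\mathcal{H}^{k}]\big]\leqslant C$ for every $k\geqslant 0$, so $\{\bm{\varsigma^{k}}\}$ has uniformly bounded second moments. (Note the zero-mean / martingale-difference property is not actually needed for this particular lemma; only the second-moment bound is used.)

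Next, fix $\varepsilon>0$ and introduce the events $A_{k}^{\varepsilon}=\big\{\|\bm{\varsigma^{k}}\|\geqslant \varepsilon\,(k+1)^{\frac{1}{2}+\delta}\big\}$. Applying Markov's inequality to the nonnegative random variable $\|\bm{\varsigma^{k}}\|^{2}$ gives
\begin{align*}
\mathbb{P}\big(A_{k}^{\varepsilon}\big)=\mathbb{P}\Big(\|\bm{\varsigma^{k}}\|^{2}\geqslant \varepsilon^{2}(k+1)^{1+2\delta}\Big)\leqslant \frac{\mathbb{E}\big[\|\bm{\varsigma^{k}}\|^{2}\big]}{\varepsilon^{2}(k+1)^{1+2\delta}}\leqslant \frac{C}{\varepsilon^{2}}\cdot\frac{1}{(k+1)^{1+2\delta}}.
\end{align*}
Since $\delta>0$ we have $1+2\delta>1$, hence $\sum_{k\geqslant 0}\mathbb{P}(A_{k}^{\varepsilon})\leqslant \frac{C}{\varepsilon^{2}}\sum_{k\geqslant 0}(k+1)^{-(1+2\delta)}<\infty$. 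By the first Borel--Cantelli lemma, $\mathbb{P}\big(\limsup_{k}A_{k}^{\varepsilon}\big)=0$: on a set $\Omega_{\varepsilon}$ of full probability, $\|\bm{\varsigma^{k}}\|<\varepsilon\,(k+1)^{\frac{1}{2}+\delta}$ for all sufficiently large $k$, and therefore $\limsup_{k\to\infty}(k+1)^{-\frac{1}{2}-\delta}\|\bm{\varsigma^{k}}\|\leqslant \varepsilon$ on $\Omega_{\varepsilon}$.

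Finally I would eliminate the dependence on $\varepsilon$ by a countable intersection: put $\Omega_{0}=\bigcap_{m\geqslant 1}\Omega_{1/m}$, which still has probability one, and on $\Omega_{0}$ we get $\limsup_{k}(k+1)^{-\frac{1}{2}-\delta}\|\bm{\varsigma^{k}}\|\leqslant 1/m$ for every $m$, forcing the $\limsup$, and hence the limit, to be zero. The only delicate point is precisely this ordering of quantifiers---the exceptional null set must be chosen independently of $\varepsilon$---and I do not expect any genuine obstacle beyond it. I would remark that the critical exponent $\delta=0$ is truly excluded by this method (it would require a law-of-the-iterated-logarithm-type refinement of the tail bounds), but the statement only asserts the conclusion for $\delta>0$, which is exactly the summable regime the argument needs.
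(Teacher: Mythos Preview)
Your proof is correct and follows essentially the same approach as the paper: Chebyshev/Markov on $\|\bm{\varsigma^{k}}\|^{2}$ using the uniform second-moment bound, summability of $(k+1)^{-(1+2\delta)}$, and the first Borel--Cantelli lemma. Your write-up is in fact slightly more careful than the paper's, since you make the tower-property step and the countable intersection over $\varepsilon=1/m$ explicit.
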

\begin{proof}
By Condition (a), for all $\epsilon>0$,
\begin{align}
\mathbb{P}\bigg( \big(k+1\big)^{-\frac{1}{2}-\delta}\big\| \bm{\varsigma^{k}}\big\|>\epsilon\bigg)\leqslant &\frac{1}{\epsilon^{2}\big(k+1\big)^{1+2\delta}}\mathbb{E}\Big[\big\| \bm{\varsigma}\big\|^{2}\Big]\notag\\\leqslant &\frac{C}{\epsilon^{2}\big(k+1\big)^{1+2\delta}}.
\end{align}
Since $\delta>0$, $\sum_{k=0}^{\infty}\frac{C}{\epsilon^{2}(k+1)^{1+2\delta}}<\infty$. Then by the Borel-Cantelli Lemma,
\begin{align}
\mathbb{P}\Big(\big(k+1\big)^{-\frac{1}{2}-\delta}\big\| \bm{\varsigma^{k}}\big\|>\epsilon \;\mbox{infinitely often}\Big)=0,
\end{align}
which yields the result.
\end{proof}

\section{Main Results}\label{sec4}
In this section, we first provide that variables $s_{i}^{k}$s  track  the network-averaged process $\bar{x}^{k}$ almost surely. Then, the weak convergence to the social optimum of distributed annealing algorithm is presented.

\subsection{Almost sure convergence of $s_{i}^{k}$}
Define by $\Lambda$ the consensus subspace in $\mathcal{R}^{nd}$,
\begin{align}\label{eq:consensus}
\Lambda=\big\{\bm{z}\in\mathcal{R}^{nd}: \bm{z}=\bm{1}_{n}\otimes a \;\mbox{for some\;} a \in \mathcal{R}^{d} \big\},
\end{align}
with $\Lambda^{\perp}$ as its orthogonal subspace in $\mathcal{R}^{nd}$. The following lemma holds with the communication topology given in Assumption \ref{Ass:graph}.

\begin{lemma}\cite[Lemma 4.3]{kar2013distributed}\label{Lem:graph}
Let $\{\bm{z}^{k}\}$ be an $\mathcal{R}^{nd}$-valued $\{\mathcal{H}^{k}\}$ adapted process such that $\bm{z}^{k}\in \Lambda^{\perp}$ for all $k$. With Assumption \ref{Ass:graph},  there exists a measurable $\{\mathcal{H}^{k+1}\}$ adapted $\mathcal{R}_{+}$ valued
process $\{r^{k}\}$ (depending on $\{\bm{z}^{k}\}$ and $\{L^{k}\}$) and a constant $c_r > 0$, such that $0\leqslant r^{k} \leqslant 1 $ holds a.s. and
\begin{align*}
\big\|\big(I_{nd}-\beta^{k} L^{k}\otimes I_{d}\big)\bm{z}^{k} \big\|\leqslant (1-r^{k})\big\|\bm{z}^{k}\big\|
\end{align*}
with
\begin{align*}
\mathbb{E}\big [r^{k} | \mathcal{H}^{k}\big] \geqslant \frac{c_{r}}{(k+1)^{\tau_{\beta}}}, \mbox{a.s.}
\end{align*}
for a sufficiently large $k$.
\end{lemma}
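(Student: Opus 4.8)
The plan is to exhibit the process $\{r^k\}$ explicitly as the exact one-step contraction ratio, and then lower-bound its conditional mean by routing the connectedness hypothesis through $\bar L$. First I would record two elementary facts valid for all sufficiently large $k$: since $\beta^k\to 0$ while $0\le\lambda_i(L^k)\le 2(n-1)$ uniformly (Gershgorin, as $L^k=D^k-W^k$), eventually $\beta^k\le 1/(n-1)$, so every eigenvalue of $I_{nd}-\beta^k L^k\otimes I_d$ lies in $[-1,1]$ and hence $\|(I_{nd}-\beta^k L^k\otimes I_d)\bm{z}^{k}\|\le\|\bm{z}^{k}\|$. This lets me define $r^k:=1-\|(I_{nd}-\beta^k L^k\otimes I_d)\bm{z}^{k}\|/\|\bm{z}^{k}\|$ on $\{\bm{z}^{k}\neq 0\}$ and $r^k:=1$ on $\{\bm{z}^{k}=0\}$ (where the asserted inequality is vacuous). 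By construction $r^k$ is a measurable function of $(\bm{z}^{k},L^k)$, hence $\{\mathcal{H}^{k+1}\}$-adapted and $\mathcal{R}_{+}$-valued, it satisfies $0\le r^k\le 1$ for large $k$, and the contraction inequality holds with equality. The entire content of the lemma then reduces to the lower bound on $\mathbb{E}[r^k\mid\mathcal{H}^k]$.

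For that bound I would, on $\{\bm{z}^{k}\neq 0\}$, normalize to $\hat{\bm{z}}:=\bm{z}^{k}/\|\bm{z}^{k}\|\in\Lambda^{\perp}$, an $\mathcal{H}^k$-measurable unit vector, and expand $\|(I_{nd}-\beta^k L^k\otimes I_d)\hat{\bm{z}}\|^2 = 1-2\beta^k\hat{\bm{z}}^{\top}(L^k\otimes I_d)\hat{\bm{z}}+(\beta^k)^2\|(L^k\otimes I_d)\hat{\bm{z}}\|^2$. Writing $a$ for $1$ minus this quantity, so $a\in[0,1]$, the elementary inequality $1-\sqrt{1-a}\ge a/2$ gives $r^k\ge\beta^k\hat{\bm{z}}^{\top}(L^k\otimes I_d)\hat{\bm{z}}-\tfrac12(\beta^k)^2 C_0$ with $C_0:=\big(2(n-1)\big)^2$, using $L^k\succeq 0$ only to note the first-order term is nonnegative. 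The crucial feature is that this lower bound is \emph{linear} in $L^k$: since $L^k$ is independent of $\mathcal{H}^k$ with $\mathbb{E}[L^k]=\bar L$ (Assumption \ref{Ass:graph}(a)), taking $\mathbb{E}[\cdot\mid\mathcal{H}^k]$ yields $\mathbb{E}[r^k\mid\mathcal{H}^k]\ge\beta^k\hat{\bm{z}}^{\top}(\bar L\otimes I_d)\hat{\bm{z}}-\tfrac12(\beta^k)^2 C_0$. Then I invoke Assumption \ref{Ass:graph}(b): $\lambda_2(\bar L)>0$, equivalently $\ker\bar L=\mathrm{span}\{\bm{1}_{n}\}$, so $\bar L\otimes I_d$ restricted to $\Lambda^{\perp}$ has smallest eigenvalue $\lambda_2(\bar L)$, whence $\hat{\bm{z}}^{\top}(\bar L\otimes I_d)\hat{\bm{z}}\ge\lambda_2(\bar L)$. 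Plugging in $\beta^k=c_\beta/k^{\tau_\beta}$ and absorbing the $O((\beta^k)^2)$ correction into the leading $O(\beta^k)$ term for large $k$ (and using $1/k^{\tau_\beta}\ge 1/(k+1)^{\tau_\beta}$) gives $\mathbb{E}[r^k\mid\mathcal{H}^k]\ge c_r/(k+1)^{\tau_\beta}$ with, e.g., $c_r=\tfrac12 c_\beta\lambda_2(\bar L)$; on $\{\bm{z}^{k}=0\}$ the bound is trivial since there $r^k=1$.

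I expect the only genuine subtlety — as opposed to routine bookkeeping — to be resisting the temptation to bound $r^k$ pathwise by a contraction factor \emph{before} averaging. Along a fixed direction $\hat{\bm{z}}$, a disconnected realization of $L^k$ can leave that direction untouched and force $r^k=0$, so no deterministic positive contraction factor exists; it is essential to keep the estimate linear in $L^k$ until after conditioning on $\mathcal{H}^k$, which is precisely where Assumption \ref{Ass:graph}(a) (independence, making $\mathbb{E}[\hat{\bm{z}}^{\top}(L^k\otimes I_d)\hat{\bm{z}}\mid\mathcal{H}^k]=\hat{\bm{z}}^{\top}(\bar L\otimes I_d)\hat{\bm{z}}$) and Assumption \ref{Ass:graph}(b) (connectedness of $\bar L$) together make the $\lambda_2(\bar L)$ bound possible. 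Everything else — the uniform eigenvalue bound $\lambda_n(L^k)\le 2(n-1)$, the smallness of $\beta^k$ needed for the norm and square-root inequalities, and the asymptotic absorption of the lower-order term — is standard and uses only $\beta^k\to 0$.
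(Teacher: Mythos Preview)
The paper does not prove this lemma at all: it is quoted verbatim as \cite[Lemma~4.3]{kar2013distributed} and used as a black box in the proof of Theorem~\ref{The:consensus}. Your argument is correct and is essentially the standard proof one finds in that reference --- define $r^k$ as the exact one-step contraction ratio, expand the squared norm, use $1-\sqrt{1-a}\ge a/2$ to linearize in $L^k$, and then invoke independence of $L^k$ from $\mathcal{H}^k$ together with $\lambda_2(\bar L)>0$ on $\Lambda^{\perp}$; the uniform Gershgorin bound $\lambda_n(L^k)\le 2(n-1)$ and $\beta^k\to 0$ handle all the side conditions. Your remark that the bound must be kept linear in $L^k$ until \emph{after} conditioning (since individual realizations of $L^k$ may be disconnected) is exactly the point of the lemma and the reason Assumption~\ref{Ass:graph}(a)--(b) are stated the way they are.
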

Next, variables $s_{i}^{k}$s track the network-averaged process $\bar{x}^{k}$ almost surely is given.
\begin{theorem}(Convergence to Consensus Subspace)\label{The:consensus}
Under Assumptions \ref{Ass:graph} and \ref{Ass:function}, for every $\tau \in [0, \frac{1}{2}-\tau_{\beta})$, with Condition (a) and (b),
\begin{align*}
\mathbb{P}\Big(\lim_{k\rightarrow \infty}(k+1)^{\tau}\big\|s_{i}^{k}-\bar{x}^{k}\big\| =0\Big)=1,\quad \forall i,
\end{align*}
where $\bar{x}^{k}=\frac{1}{n}\sum_{i=1}^{n}x_{i}^{k}$ is the network-averaged process.
\end{theorem}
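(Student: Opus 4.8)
The plan is to track the dynamics of the disagreement vector $\bm{z}^k := \bm{s}^k - \bm{1}_n \otimes \bar{x}^k$, or more conveniently the projection of the stacked iterate onto $\Lambda^\perp$, and to show it decays at rate $(k+1)^{-\tau}$ by casting its recursion into the form required by Lemma~\ref{Lem:martingale}. First I would write the stacked updates \eqref{Alg1-1}--\eqref{Alg1-3} in vector form: $\bm{s}^k = (I_{nd} - \beta^k L^k \otimes I_d)\bm{v}^k$, $\bm{x}^{k+1} = \bm{x}^k - \alpha^k(\nabla^k + \bm{\varsigma}^k) + \gamma^k \bm{\iota}^k$, and $\bm{v}^{k+1} = \bm{s}^k + \bm{x}^{k+1} - \bm{x}^k$, where $\nabla^k := \mathrm{col}(\nabla_{x_i^k} g_i(x_i^k,s_i^k))$. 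Substituting, $\bm{v}^{k+1} = (I_{nd} - \beta^k L^k\otimes I_d)\bm{v}^k - \alpha^k(\nabla^k + \bm{\varsigma}^k) + \gamma^k\bm{\iota}^k$. Let $P_\perp$ denote the orthogonal projector onto $\Lambda^\perp$; since $L^k$ annihilates $\Lambda$ and preserves $\Lambda^\perp$, applying $P_\perp$ gives a closed recursion for $\bm{v}_\perp^k := P_\perp \bm{v}^k$, namely $\bm{v}_\perp^{k+1} = (I_{nd}-\beta^k L^k\otimes I_d)\bm{v}_\perp^k - \alpha^k P_\perp(\nabla^k + \bm{\varsigma}^k) + \gamma^k P_\perp \bm{\iota}^k$. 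A short computation relating $\bm{s}^k - \bm{1}_n\otimes\bar x^k$ to $\bm{v}_\perp^k$ (they differ by a $\beta^k L^k$-order term plus the projected gradient/noise increments) shows it suffices to prove $(k+1)^\tau \|\bm{v}_\perp^k\| \to 0$ a.s.

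Next I would take norms and apply Lemma~\ref{Lem:graph} to the leading term: $\|(I_{nd}-\beta^k L^k\otimes I_d)\bm{v}_\perp^k\| \leqslant (1-r^k)\|\bm{v}_\perp^k\|$ with $\mathbb{E}[r^k\mid\mathcal{H}^k] \geqslant c_r/(k+1)^{\tau_\beta}$. Hence
\begin{align*}
\|\bm{v}_\perp^{k+1}\| \leqslant (1-r^k)\|\bm{v}_\perp^k\| + \alpha^k\|P_\perp(\nabla^k+\bm{\varsigma}^k)\| + \gamma^k\|P_\perp\bm{\iota}^k\|.
\end{align*}
To match the template \eqref{eq:martingale}, set $p^k = r^k$ (so $\delta_1 = \tau_\beta$, $a_1 = c_r$), and I must bound the driving terms by $q^k V^k(1+O^k)$ with $q^k \leqslant a_2/(k+1)^{\delta_2}$, $\sup_k\|V^k\|<\infty$ a.s., and $\{O^k\}$ i.i.d.\ with a $(2+\epsilon_1)$ moment. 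The dominant stepsize here is $\gamma^k \asymp (k+1)^{-1/2}(\log\log k)^{-1/2}$, which is essentially order $(k+1)^{-1/2+o(1)}$, while $\alpha^k \asymp (k+1)^{-1}$ is smaller. The key sub-claim is that $\|\nabla^k\|$ stays bounded along trajectories: by Assumption~\ref{Ass:function}(c) each $\nabla_\ell g_i$ differs from the common $\nabla_\ell G$ by a bounded amount, and by Assumption~\ref{Ass:function}(a)--(b) $\nabla G$ is globally Lipschitz; combined with boundedness of $\|s_i^k-\bar x^k\|$ (which we are in the process of proving — so this needs a careful bootstrap, e.g.\ first establish $\|\bm{v}_\perp^k\|$ is bounded a.s.\ using only that the increments are $o(1)$, then upgrade to the rate) and with boundedness of $\bm{\varsigma}^k$ from Lemma~\ref{Lem: gradient noise}, one gets $\sup_k\|\nabla^k\| < \infty$ a.s. Then the $\alpha^k$ term is absorbed into the $\gamma^k$ term, and we take $q^k = \gamma^k \cdot C$ (absorbing the a.s.\ bound on $V^k := \|P_\perp\nabla^k\| + \|\bm{\varsigma}^k\|$-type quantity), $O^k := \|P_\perp\bm{\iota}^k\|$, which is i.i.d.\ with all moments finite since $\bm{\iota}^k$ is Gaussian.

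With $\delta_1 = \tau_\beta$ and $\delta_2 = \tfrac12 - \eta$ for arbitrarily small $\eta>0$ (the $\sqrt{\log\log k}$ factor costs only an $\eta$ in the polynomial exponent) and $\epsilon_1$ as large as we like (Gaussian moments), Lemma~\ref{Lem:martingale} yields $(k+1)^{\delta_0}\|\bm{v}_\perp^k\| \to 0$ a.s.\ for every $\delta_0 < \delta_2 - \delta_1 - \tfrac{1}{2+\epsilon_1} = \tfrac12 - \tau_\beta - \eta - \tfrac{1}{2+\epsilon_1}$; letting $\eta\downarrow 0$ and $\epsilon_1\uparrow\infty$ covers every $\tau \in [0, \tfrac12 - \tau_\beta)$. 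Finally I would translate the bound on $\|\bm{v}_\perp^k\|$ back to $\|s_i^k - \bar x^k\|$, checking that the extra $\beta^k L^k \bm{v}_\perp^k$ term and the projected $\alpha^k$, $\gamma^k$ increments are of the same or smaller order so the rate is preserved, and read off the per-agent statement. I expect the main obstacle to be the bootstrap establishing a.s.\ boundedness of $\|\bm{v}_\perp^k\|$ (hence of $\|\nabla^k\|$) \emph{before} the rate is known — the non-convexity means we cannot appeal to any Lyapunov decrease of the $\bm{x}$-iterate, so the argument must rely solely on the contraction in Lemma~\ref{Lem:graph} dominating the $o(1)$ perturbations, which is exactly the regime Lemma~\ref{Lem:martingale} (applied first with $\delta_0 = 0$) is built to handle; care is needed that the constants are trajectory-independent.
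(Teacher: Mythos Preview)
Your overall architecture---write the stacked recursion, project onto $\Lambda^\perp$, use Lemma~\ref{Lem:graph} for the contraction, and feed the result into Lemma~\ref{Lem:martingale} with $\delta_1=\tau_\beta$, $\delta_2\approx\tfrac12$, $\epsilon_1\to\infty$---matches the paper's proof exactly. The paper works with $\bm{\tilde s}^{k}:=\bm{s}^{k}-\bm{1}_n\otimes\bar x^{k}$ rather than $\bm{v}_\perp^{k}$, but these differ only by lower-order terms and the translation is routine.

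The genuine gap is your treatment of the gradient term. You argue that $\sup_k\|\nabla^k\|<\infty$ a.s., reasoning from Assumption~\ref{Ass:function}(c) (bounded dissimilarity) plus Lipschitz continuity of $\nabla G$ plus boundedness of $\|s_i^k-\bar x^k\|$. But Lipschitz continuity does \emph{not} imply boundedness: $\nabla G(x_i^k,s_i^k)$ can grow without bound unless $\|x_i^k\|$ and $\|\bar x^k\|$ are themselves bounded, and nothing in the setup controls those (the $x_i$'s are each agent's decision variable in a non-convex problem with injected Gaussian noise---there is no a~priori confinement). So the bootstrap you sketch cannot close.

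The paper sidesteps this by never bounding $\|\nabla^k\|$ at all. It bounds only the \emph{projected} gradient $T_1=P_\perp\nabla^{k-1}$, whose $j$-th block is $\nabla_{x_j}g_j(x_j^{k-1},s_j^{k-1})-\tfrac1n\sum_i\nabla_{x_i}g_i(x_i^{k-1},s_i^{k-1})$. Adding and subtracting the gradients evaluated at the common point $\bar x^{k-1}$, the bounded-dissimilarity hypothesis~\eqref{Ass:function 3} kills the ``centroid'' part with a uniform constant, and the Lipschitz hypotheses~\eqref{Ass:function 1}--\eqref{Ass:function 2} control the remainder by $\|x_i^{k-1}-\bar x^{k-1}\|$. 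This yields $\|T_1\|\leqslant c_3+c_4\|\bm{\tilde x}^{k-1}\|$ (the paper's (A-8)--(A-9)), which, multiplied by $\alpha^{k-1}=O(1/k)$, is absorbed into the $q^kV^k(1+O^k)$ driving term without any appeal to boundedness of $\nabla^k$ or of the iterates. Replacing your ``$\|\nabla^k\|$ bounded'' step with this decomposition repairs the argument; the rest of your outline then goes through as written.
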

\begin{proof}
With Assumption \ref{Ass:graph}, $\big(\bm{1}_{n} \otimes I_{d}\big)^{\top} \big( L^{k}\otimes I_{d}\big) = 0$. By \eqref{Alg1-1},
\begin{align}\label{A-1}
\bar{s}^{k}=&\bar{v}^{k}.
\end{align}
Define $\bm{\tilde{s}^{k}}=\bm{{s}^{k}}-\bm{1}_{n}\otimes \bar{x}^{k}$ and $\bm{\tilde{x}^{k}}=\bm{{x}^{k}}-\bm{1}_{n}\otimes \bar{x}^{k}$, for all $k\geqslant 0$. Since $\bm{\tilde{x}^k}\in \Lambda^{\perp}$, where $\Lambda^{\perp}$ is the orthogonal subspace of the consensus subspace $\Lambda$ and $\mathcal{P}_{nd}=\frac{1}{n}\big(\bm{1}_{n} \otimes I_{d}\big)\big(\bm{1}_{n} \otimes I_{d}\big)^{\top}$.
By \eqref{Alg1-1} and \eqref{A-1}, we have
\begin{align}\label{A-3}
\bm{\tilde{s}^{k}}=&\Big(I_{nd}-\beta^{k}\big(L^{k}\otimes I_{d}\big)\Big)\Big(\bm{{v}^{k}}-\bm{1}_{n}\otimes \bar{x}^{k}\Big)\notag\\
=&\Big(I_{nd}-\beta^{k}\big(L^{k}\otimes I_{d}\big)\Big)\Big(\bm{{s}^{k-1}}-\bm{1}_{n}\otimes \bar{x}^{k}\Big)
\notag\\&+\Big(I_{nd}-\beta^{k}\big(L^{k}\otimes I_{d}\big)\Big)\Big(\bm{x^{k}}-\bm{x^{k-1}}\Big)\notag\\
=&\Big(I_{nd}-\beta^{k}\big(L^{k}\otimes I_{d}\big)\Big)\bm{\tilde{s}^{k-1}}
\notag\\&+\Big(I_{nd}-\beta^{k}\big(L^{k}\otimes I_{d}\big)\Big)\Big(\bm{1}_{n}\otimes \bar{x}^{k-1}-\bm{1}_{n}\otimes \bar{x}^{k}\Big)
\notag\\&+\Big(I_{nd}-\beta^{k}\big(L^{k}\otimes I_{d}\big)\Big)\Big(\bm{x^{k}}-\bm{x^{k-1}}\Big)\notag\\
=&\Big(I_{nd}-\beta^{k}\big(L^{k}\otimes I_{d}\big)\Big)\bm{\tilde{s}^{k-1}}\notag\\&+\Big(I_{nd}-\beta^{k}\big(L^{k}\otimes I_{d}\big)\Big)\Big(\bm{\tilde{x}^{k}}-\bm{\tilde{x}^{k-1}}\Big),
\end{align}
for all $k\geqslant 0$.  Then, let us estimate $\big(\bm{\tilde{x}^{k}}-\bm{\tilde{x}^{k-1}}\big)$. From relation \eqref{Alg1-2}, we see that for all $k\geqslant 1$,
\begin{align}\label{A-4}
&\bm{\tilde{x}^{k}}-\bm{\tilde{x}^{k-1}}
\notag\\=&\bm{x^{k-1}}-\alpha^{k-1}\Big[ \nabla_{\bm{x^{k-1}}} g\big(\bm{x^{k-1}},\bm{s^{k-1}}\big)+\bm{\varsigma^{k-1}} \Big]+\gamma^{k-1}\bm{\iota^{k-1}}
\notag\\&+\Big(\bm{1}_{n}\otimes \bar{x}^{k-1}-\bm{1}_{n}\otimes \bar{x}^{k}\Big)-\bm{x^{k-1}}\notag\\
=&-\alpha^{k-1}\Big[ \nabla_{\bm{x^{k-1}}}g\big(\bm{x^{k-1}},\bm{s^{k-1}}\big)+\bm{\varsigma^{k-1}} \Big]+\gamma^{k-1}\bm{\iota^{k-1}}
\notag\\&+\alpha^{k-1}\bigg[\bm{1}_{n}\otimes \Big(\frac{1}{n}\sum_{i=1}^{n}\nabla_{x_{i}^{k-1}} g_{i}\big(x_{i}^{k-1},s_{i}^{k-1}\big)\Big)+\bm{1}_{n}\otimes\bar{\varsigma}^{k-1} \bigg]\notag\\
&-\gamma^{k-1}\Big(\bm{1}_{n}\otimes\bar{\iota}^{k-1}\Big )\notag\\
=&-\alpha^{k-1}T_{1}-\alpha^{k-1}T_{2}+\gamma^{k-1}T_{3},
\end{align}
where
\begin{align*}
\begin{cases}
T_{1}=&-\bm{1}_{n}\otimes \Big(\sum_{i=1}^{n}\frac{1}{n}\nabla_{x_{i}^{k-1}} g_{i}\big(x_{i}^{k-1},s_{i}^{k-1}\big)\Big)\notag\\&+\nabla_{\bm{{x}^{k-1}}} g\big(\bm{x^{k-1}},\bm{s^{k-1}}\big),\\
T_{2}=&\bm{\varsigma^{k-1}}-\bm{1}_{n}\otimes\bar{\varsigma}^{k-1},\\
T_{3}=&\bm{\iota^{k-1}}-\bm{1}_{n}\otimes\bar{\iota}^{k-1},
\end{cases}
\end{align*}
for all $k\geqslant 0$. Consider the $j$-th component of $T_{1}$:
\begin{align}\label{A-5}
T_{1}^{j}\doteq \nabla_{x_{j}^{k-1}} g_{j}\big(x_{j}^{k-1},s_{j}^{k-1}\big)-\frac{1}{n}\sum_{i=1}^{n}\nabla_{x_{i}^{k-1}} g_{i}\big(x_{i}^{k-1},s_{i}^{k-1}\big),
\end{align}
and note that $T_{1}^{j}$ may be composed as
\begin{align}\label{A-6}
&T_{1}^{j}\notag\\=
&\Big(\nabla_{x_{j}^{k-1}} g_{j}\big(x_{j}^{k-1},s_{j}^{k-1}\big)-\nabla_{\bar{x}^{k-1}} g_{j}\big(\bar{x}^{k-1},s_{j}^{k-1}\big)\Big)\notag\\
&+\Big(\nabla_{\bar{x}^{k-1}}g_{j}\big(\bar{x}^{k-1},s_{j}^{k-1}\big)-\frac{1}{n}\sum_{i=1}^{n}\nabla_{\bar{x}^{k-1}} g_{i}\big(\bar{x}^{k-1},s_{i}^{k-1}\big)\Big)\notag\\
&+\Big(\frac{1}{n}\sum_{i=1}^{n}\nabla_{\bar{x}^{k-1}} g_{i}\big(\bar{x}^{k-1},s_{i}^{k-1}\big)\notag\\
&-\frac{1}{n}\sum_{i=1}^{n}\nabla_{x_i^{k-1}} g_{i}\big(x_{i}^{k-1},s_{i}^{k-1}\big)\Big).
\end{align}
For the second term on the R.H.S of \eqref{A-6}, note that, by Assumption \ref{Ass:function}, there exists a constant $c_{1}>0$ such that
\begin{align}\label{A-7}
&\Big\|\nabla_{\bar{x}^{k-1}} g_{j}\big(\bar{x}^{k-1},s_{j}^{k-1}\big)-\frac{1}{n}\sum_{i=1}^{n}\nabla_{\bar{x}^{k-1}} g_{i}\big(\bar{x}^{k-1},s_{i}^{k-1}\big)\Big\|
\notag\\=&\Big\|\nabla_{\bar{x}^{k-1}} g_{j}\big(\bar{x}^{k-1},s_{j}^{k-1}\big)-\nabla_{\bar{x}^{k-1}} G\big(\bar{x}^{k-1},s_{i}^{k-1}\big)\Big\|\leqslant c_{1}.
\end{align}
By the Lipschitz continuity of gradients in Assumption \ref{Ass:function}, we have, for a sufficiently large $c_{2}$, we have
\begin{align}\label{A-8}
\big\|T_{1}^{j}\big\|\leqslant c_{1}+c_{2}\sum_{i=1}^{n}\big\| x_{i}^{k-1}-\bar{x}^{k-1}\big\|.
\end{align}
Hence, there exist constants $c_{3}$ and $c_{4}$ such that
\begin{align}\label{A-9}
\big\|T_{1}\big\|\leqslant c_{3}+c_{4}\big\|\bm{x^{k}}-\bm{1}_{n}\otimes \bar{x}^{k}\big\|=c_{3}+c_{4}\big\|\bm{\tilde{x}^{k-1}}\big\|.
\end{align}
For term $T_{2}$ in \eqref{A-4}, consider an arbitrarily small $\delta\in (0,\frac{1}{2})$. Define $R^{k}=(k+1)^{-\frac{1}{2}-\delta}\big\|\bm{\varsigma^{k}}-\bm{1}_{n}\otimes{\bar{\varsigma}^{k}}\big\|$ for all $k$. By Lemma \ref{Lem:martingale}, we have $\lim_{k\rightarrow\infty}R^{k}=0$ a.s. Since $\frac{1}{k}\leqslant \frac{2}{k+1}$ for all $k\geqslant1$, we have
\begin{align}\label{A-10}
\big\|\alpha^{k-1}T_{2}\big\|=\alpha^{k-1}k^{\frac{1}{2}+\delta}R^{k-1}\leqslant \frac{2c_{\alpha}}{k^{\frac{1}{2}-\delta}}R^{k-1}, \; \mbox{for large}\; k.
\end{align}
Similarly,
\begin{align}\label{A-11}
\big\|\gamma^{k-1}T_{3}\big\|\leqslant \frac{2c_{\gamma}\big\|T_{3}\big\|}{k^{\frac{1}{2}}\sqrt{\log\log(k-1)}}\leqslant \frac{2c_{\gamma}}{k^{\frac{1}{2}}}\big\|T_{3}\big\|, \; \mbox{for large}\; k.
\end{align}
Since $\big\|T_{3}\big\|$ has moments of all orders, by \eqref{A-10}-\eqref{A-11},  there exists $\mathcal{R}_{+}$-valued $\big\{ \mathcal{H}^{k}\big\}$-adapted process $\{M^{k}\}$ and $\{N^{k}\}$ such that
\begin{align}\label{A-12}
\big\|\alpha^{k-1}T_{2}\big\|+\big\|\gamma^{k-1}T_{3}\big\|\leqslant \frac{1}{k^{\frac{1}{2}-\delta}}M^{k}(1+N^{k}),\; \mbox{for large}\; k,
\end{align}
with $\{M^{k}\}$ being bounded a.s. and  $\{N^{k}\}$ possessing moments of all orders.
Since $\bm{\tilde{x}^{k}}\in \Lambda^{\perp}$ for all $k\geqslant 0$, by Lemma \ref{Lem:graph} there exists a $\{\mathcal{H}^{k+1}\}$ adapted $\mathcal{R}_{+}$-valued process $\{r^{k}\}$ and a constant $c_{5}>0$ such that $0\leqslant r^k\leqslant 1$ a.s. and
\begin{align}\label{A-13}
\Big\|\big( I_{nd}-\beta^k\big( L^{k}\otimes I_{d}\big)\big)\bm{\tilde{x}^{k}}\Big\|\leqslant (1-r^{k})\Big\|\bm{\tilde{x}^{k}}\Big\|,
\end{align}
with
\begin{align}\label{A-14}
\mathbb{E}\big[r^{k}|\mathcal{H}^{k}\big]\geqslant \frac{c_{5}}{(k+1)^{\tau_{\beta}}}, \quad \mbox{a.s.}
\end{align}
for all $k$ large enough.
Thus, by \eqref{A-3}, \eqref{A-4}, \eqref{A-9}, \eqref{A-12} and \eqref{A-13} we obtain that for large $k$,
\begin{align}\label{A-15}
&\big\|\bm{\tilde{s}^{k}}\big\|\notag\\
\leqslant&(1-r^{k})\big\|\bm{\tilde{s}^{k-1}}\big\|+(1-r^{k})\big\|\bm{\tilde{x}^{k}}-\bm{\tilde{x}^{k-1}}\big\|\notag\\
\leqslant&\big( 1-r^{k}\big)\big\|\bm{\tilde{s}^{k-1}}\big\|+ \big( 1-r^{k}\big)\alpha^{k-1}c_{3}\notag\\
&+(1-r^{k})\alpha^{k-1}c_{4}\big\|\bm{\tilde{x}^{k-1}}\big\|+\frac{1-r^{k}}{k^{\frac{1}{2}-\delta}}M^{k}(1+N^{k}).
\end{align}
Since $\alpha^{k}=\frac{c_{\alpha}}{k}\leqslant \frac{2}{(k+1)^{\frac{1}{2}-\delta}}$, by \eqref{A-15},  we have
\begin{align}\label{A-16}
\big\|\bm{\tilde{s}^{k}}\big\|\leqslant \big( 1-r^{k}\big)\big\|\bm{\tilde{s}^{k-1}}\big\|+\frac{ c_{7}}{k^{\frac{1}{2}-\delta}}M^{k}(1+N^{k})\
\end{align}
for large $k$ and a constant $c_{7}$. According to Lemma \ref{Lem:martingale}, we conclude that for all $\tau$ and $\epsilon_{1}>0$ with
\begin{align}\label{A-17}
0\leqslant \tau<\frac{1}{2}-\delta-\tau_{\beta}-\frac{1}{2+\epsilon_{1}},
\end{align}
we have $\lim_{k\rightarrow \infty}(k+1)^{\tau}\bm{\tilde{s}^{k}}=0$. By taking $\epsilon_{1}\rightarrow \infty$ (Since $N^{k}$ possesses moments of all orders) and $\delta\rightarrow 0$, $\lim_{k\rightarrow \infty}(k+1)^{\tau}\bm{\tilde{s}^{k}}=0$ for all $\tau\in [0,\frac{1}{2}-\tau_{\beta})$, which completes the proof.
\end{proof}
Theorem  \ref{The:consensus} shows that variables $s_{i}^{k}$ can track $\bar{x}^{k}$ almost surely, which is unavailable for agent $i$ .

\subsection{Weak convergence}

In this section,  the weak convergence of the agent estimates $\{x_{i}^k\}$ to the set of global minima of $g(\cdot,\cdot)$ is given. The following assumption for common function $G(x,y)$ given in Assumption \ref{Ass:function} is required.

\begin{assumption}\label{Ass:common function}
$G:\mathcal{R}^{d}\times \mathcal{R}^{d} \rightarrow \mathcal{R}_+$ is a twice differentiable function such that
\begin{itemize}
\item[(a)] $\min_{x} G(x,y)=0$.
\item[(b)] $\lim_{\|x\|\rightarrow\infty} G(x,y)=\infty$ and $\lim_{\|x\|\rightarrow\infty} \|\partial_x G(x,y)\|=\infty$.
\item[(c)] $\inf \big(\|\partial_{x} G(x,y)\|^{2}-\bigtriangleup_{x} G(x,y) \big)>-\infty$.
\item[(d)]For $\epsilon>0$, let $d\pi^{\epsilon}(x)=\frac{1}{Z^{\epsilon}}\exp\big(-\frac{2G(x,y)}{\epsilon^{2}})\big)dx$, and
 $X^{\epsilon}=\int \exp\big(\frac{-2G(x,y)}{\epsilon^{2}}\big)dx<\infty$. $G$ satisfies that $\pi^{\epsilon}$ has a weak limit $\pi$ as $\epsilon\rightarrow 0$.
\item[(e)]$\lim\inf_{\|x\|\rightarrow \infty}\Big\langle \frac{\partial_x G(x,y)}{\|\partial_x G(x,y)\|},\frac{x}{\|x\|}\Big\rangle\geqslant C(d)$, where $C(d)=\Big(\frac{4d-4}{4d-3}\Big)^{\frac{1}{2}}$.
\item[(f)]$\lim\inf_{\|x\|\rightarrow \infty}\frac{\|\partial_x G(x,y)\|}{\|x\|}>0$.
\item[(g)]$\lim\sup_{\|x\|\rightarrow \infty}\frac{\|\partial_x G(x,y)\|}{\|x\|}<\infty$.
\end{itemize}
\end{assumption}

\begin{remark}
Assumption \ref{Ass:common function} is a modification of Assumption \ref{Ass:global function} for seeking social optimum of aggregative games from centralized annealing assumptions \cite{gelfand1991recursive} given in Appendix.
\end{remark}
We now state the weak convergence of the agent estimates $\{x_{i}^k\}$ to the set of global optimum of $g(\cdot,\cdot)$.

\begin{theorem}\label{The:Convergence}
Under Assumptions \ref{Ass:graph}- \ref{Ass:common function},  for any bounded contimuous function $f_{i}:\mathcal{R}^{d}\times\mathcal{R}^{d}\rightarrow \mathcal{R}$,
\begin{align*}
\lim_{k\rightarrow\infty}\mathbb{E}_{0,x_{i}^{k}}[f_{i}(x_{i}^{k},s_{i}^{k})]=\pi\Big(f_{i}\Big).
\end{align*}
\end{theorem}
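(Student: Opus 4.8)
The plan is to reduce the distributed dynamics to an effective centralized annealing recursion and then invoke the classical convergence result of Gelfand--Mitter (the "centralized annealing assumptions" referenced in the Appendix) for that recursion. The first step is to write the update for the network-averaged process $\bar{x}^k = \frac{1}{n}\sum_i x_i^k$. Averaging \eqref{Alg1-2} over $i$ gives
\begin{align*}
\bar{x}^{k+1} = \bar{x}^{k} - \frac{\alpha^{k}}{n}\sum_{i=1}^{n}\nabla_{x_i^k} g_i\big(x_i^k, s_i^k\big) - \frac{\alpha^k}{n}\sum_{i=1}^n\varsigma_i^k + \frac{\gamma^k}{n}\sum_{i=1}^n \iota_i^k.
\end{align*}
The idea is that, because of Theorem \ref{The:consensus}, both $s_i^k - \bar{x}^k$ and $x_i^k - \bar{x}^k$ vanish faster than $(k+1)^{-\tau}$ for $\tau\in[0,\frac12-\tau_\beta)$ almost surely, so up to a summable perturbation the averaged gradient term $\frac{1}{n}\sum_i \nabla_{x_i^k} g_i(x_i^k, s_i^k)$ can be replaced by $\frac{1}{n}\sum_i \nabla_{x} g_i(\bar{x}^k, \bar{x}^k) = \partial_x G(\bar{x}^k,\bar{x}^k)$ plus a bounded-dissimilarity error that, after multiplication by $\alpha^k$, is absorbed into the analysis; here one uses Assumption \ref{Ass:function}(a)--(c) for the Lipschitz bounds and the fact that $\frac{1}{n}\sum_i \nabla_x g_i = \partial_x G$ up to a uniformly bounded term. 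The noise $\frac{1}{n}\sum_i\iota_i^k$ is Gaussian with covariance $\frac{1}{n}I_d$, and $\gamma^k = c_\gamma/(k^{1/2}\sqrt{\log\log k})$ is precisely the annealing schedule from \cite{gelfand1991recursive}, so the driving term is of the right form.

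The second step is to recognize the resulting scalar recursion
\begin{align*}
\bar{x}^{k+1} = \bar{x}^k - \alpha^k\,\partial_x G(\bar{x}^k,\bar{x}^k) + \alpha^k \xi^k + \gamma^k \eta^k
\end{align*}
(with $\xi^k$ a martingale-difference gradient-noise term controlled by Condition (a) and Lemma \ref{Lem: gradient noise}, and $\eta^k$ the scaled Gaussian) as exactly the discrete-time recursion studied in the centralized annealing literature. Assumption \ref{Ass:common function}(a)--(g) is the translation of the Gelfand--Mitter hypotheses: (a),(b) give coercivity and a unique minimum value $0$; (c) is the standard $\|\partial_x G\|^2 - \Delta_x G$ lower bound ensuring the Gibbs measure $\pi^\epsilon$ is well-defined; (d) gives the weak limit $\pi$ of the Gibbs measures as the temperature $\epsilon\to 0$; and (e)--(g) are the radial growth/angle conditions guaranteeing that the annealing diffusion does not escape to infinity and that its invariant measures at temperature $\epsilon^2 \sim \gamma^k{}^2/\alpha^k$ track $\pi^\epsilon$. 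Invoking that theorem (stated in the Appendix as Assumption \ref{Ass:global function}'s consequence) yields $\mathbb{E}_{0,\bar{x}^k}[f(\bar{x}^k)] \to \pi(f)$ for every bounded continuous $f$, i.e. weak convergence of $\bar{x}^k$ to $\pi$.

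The third and final step transfers this from $\bar{x}^k$ to the individual iterates. Since $f_i$ is bounded and continuous and $\|x_i^k - \bar{x}^k\|\to 0$ and $\|s_i^k - \bar{x}^k\|\to 0$ almost surely by Theorem \ref{The:consensus}, we have $f_i(x_i^k,s_i^k) - f_i(\bar{x}^k,\bar{x}^k)\to 0$ a.s. and boundedly, so $\mathbb{E}_{0,x_i^k}[f_i(x_i^k,s_i^k)] - \mathbb{E}[f_i(\bar{x}^k,\bar{x}^k)]\to 0$ by dominated convergence; the latter expectation converges to $\pi(f_i)$ by the previous step (applied with the bounded continuous function $(x,y)\mapsto f_i(x,y)$ restricted to the diagonal, or equivalently to the marginal). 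Combining gives $\lim_k \mathbb{E}_{0,x_i^k}[f_i(x_i^k,s_i^k)] = \pi(f_i)$ for all $i$.

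I expect the main obstacle to be the second step: rigorously justifying that the perturbed discrete recursion for $\bar{x}^k$ inherits the weak-convergence conclusion of the continuous-time (or unperturbed) annealing result. This requires an interpolated-process / Ito-integral argument showing that the piecewise-constant interpolation of $\bar{x}^k$ converges weakly to the Langevin diffusion $dX_t = -\partial_x G(X_t,X_t)\,dt + \epsilon(t)\,dB_t$ with slowly decreasing temperature $\epsilon(t)$, controlling the martingale gradient-noise contribution (via Condition (a), Lemma \ref{Lem: gradient noise}, and a martingale CLT / Kushner--Clark type estimate) and the tracking error from Theorem \ref{The:consensus}, and then quoting the known fact that this diffusion's law converges weakly to $\pi$ under Assumption \ref{Ass:common function}. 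The consensus estimate supplies the decay rate $(k+1)^{-\tau}$ needed to make the tracking error compatible with the step-size ratio $\gamma^k/\alpha^k$, which is the quantitative crux.
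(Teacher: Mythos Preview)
Your route differs from the paper's and has a genuine gap. You repeatedly invoke $\|x_i^k-\bar x^k\|\to 0$ (in step~1 to replace $\frac{1}{n}\sum_i\nabla_{x_i^k}g_i(x_i^k,s_i^k)$ by $\partial_x G(\bar x^k,\bar x^k)$, and in step~3 to transfer from $\bar x^k$ to $x_i^k$), attributing this to Theorem~\ref{The:consensus}. But Theorem~\ref{The:consensus} only gives $\|s_i^k-\bar x^k\|\to 0$: it is the tracking variables that reach consensus on the aggregate, not the decision variables themselves. In an aggregative game the components $x_i^*$ of the social optimum are in general distinct, so there is no reason to expect $x_i^k-\bar x^k\to 0$, and both your reduction of the averaged gradient and your dominated-convergence transfer break down. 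A secondary issue is that averaging the Gaussian injections yields $\frac{1}{n}\sum_i\iota_i^k$ with covariance $\tfrac{1}{n}I_d$, so the effective annealing constant becomes $c_\gamma/\sqrt{n}$ and the threshold condition $c_\gamma^2/c_\alpha>K_0$ in Lemma~\ref{Lem:global convergence} need not survive.

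The paper sidesteps all of this by never passing through $\bar x^k$. It fixes $i$, rewrites \eqref{Alg1-2} as
\[
x_i^{k+1}=x_i^k-\alpha^k\bigl[\nabla_{x_i^k}g_i(x_i^k,\bar x^k)+R_i^k+\varsigma_i^k\bigr]+\gamma^k\iota_i^k,
\qquad R_i^k=\nabla_{x_i^k}g_i(x_i^k,s_i^k)-\nabla_{x_i^k}g_i(x_i^k,\bar x^k),
\]
and uses the Lipschitz bound together with Theorem~\ref{The:consensus} to control $R_i^k$. The key device is Egorov's theorem: for any $\delta>0$ one finds $D_\delta$ with $\mathbb{P}\bigl(\sup_k k^\tau\|s_i^k-\bar x^k\|\le D_\delta\bigr)>1-\delta$, then replaces $R_i^k$ by a truncated $R_{i\delta}^k$ that is deterministically $O(k^{-\tau})$. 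The auxiliary process $x_{i\delta}^k$ now fits Lemma~\ref{Lem:global convergence} directly (no interpolation or It\^o argument is redone; the Appendix result is quoted as a black box), with $\xi^k=R_{i\delta}^k+\varsigma_i^k$ satisfying Assumption~\ref{Ass:global noise} for $\nu_1=0$, $\nu_2=\tau$. Since $x_{i\delta}^k=x_i^k$ on an event of probability at least $1-\delta$, one has $\bigl|\mathbb{E}f_i(x_i^k,s_i^k)-\mathbb{E}f_i(x_{i\delta}^k,s_i^k)\bigr|\le 2\|f_i\|_\infty\delta$, and letting $\delta\downarrow 0$ finishes. In short, the centralized annealing lemma is applied to each agent's own recursion, and only the tracking error in the \emph{second} argument of $g_i$ is absorbed as noise; no consensus of the $x_i^k$ is ever needed.
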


\begin{proof}
According to \eqref{Alg1-2}, we have
\begin{align}\label{B-1}
x_{i}^{k+1}=&x_{i}^{k}-\alpha^{k}\Big[ \nabla_{x_{i}^{k}} g_{i}\big(x_{i}^{k},s_{i}^{k}\big)+\varsigma_{i}^{k} \Big]+\gamma^{k}\iota_{i}^{k}\notag\\
=&x_{i}^{k}-\alpha^{k}\Big[ \nabla_{x_{i}^{k}} g_{i}\big(x_{i}^{k},\bar{x}^{k}\big)+\nabla_{x_{i}^{k}} g_{i}\big(x_{i}^{k},s_{i}^{k}\big)
\notag\\&-\nabla_{x_{i}^{k}} g_{i}\big(x_{i}^{k},\bar{x}^{k}\big)+\varsigma_{i}^{k} \Big]+\gamma^{k}\iota_{i}^{k}.
\end{align}
Fix $\tau\in[0,\frac{1}{2}-\tau_{\beta})$  with any $\delta>0$. By Theorem \ref{The:consensus},
\begin{align}\label{B-2}
\lim_{k\rightarrow\infty } k^{\tau}\big\| s_{i}^k-\bar{x}^{k}\big\|=0
\end{align}
holds for all $i$. According to Egorov's theorem, there exists a constant $D_{\delta}>0$ such that
\begin{align}\label{B-3}
\mathbb{P}\Big(\sup_{k\rightarrow\infty} k^{\tau}\big\| s_{i}^k-\bar{x}^{k} \big\|\leqslant D_{\delta}\Big)>1-\delta, \quad \forall i.
\end{align}
By Assumption \ref{Ass:function},
\begin{align}\label{B-4}
&\mathbb{P}\Big( \sup_{k\geqslant 0} k^{\tau}\|s_{i}^{k}-\bar{x}^{k}\|\leqslant D_{\delta}\Big)\notag\\\leqslant &\mathbb{P}\Big( \sup_{k\geqslant 0} k^{\tau}\big\| \nabla_{x_{i}^{k}} g_{i}\big(x_{i}^{k},s_{i}^{k}\big)-\nabla_{x_{i}^{k}} g_{i}\big(x_{i}^{k},\bar{x}^{k}\big)\big\|\leqslant LD_{\delta}\Big).
\end{align}
Define $R_{i}^{k}=\nabla_{x_{i}^{k}} g_{i}\big(x_{i}^{k},s_{i}^{k}\big)-\nabla_{x_{i}^{k}} g_{i}\big(x_{i}^{k},\bar{x}^{k}\big)$.
Consider the $\mathcal{H}^{k}$-process $\{ R_{i\delta}^{k}\}$, given by
\begin{align}\label{B-5}
R_{i\delta}^{k}=\begin{cases}
R_{i}^{k}\;\;, \quad \mbox{if}\quad k^{\tau}\|R_{i}^{k}\|\leqslant LD_{\delta}\\
\frac{LD_{\delta}}{k^{\tau}},\quad \mbox{if}\quad k^{\tau}\|R_{i}^{k}\|> LD_{\delta},
\end{cases}
\end{align}
for all $k\geqslant 0$. By construction, we have
\begin{align}\label{B-6}
\mathbb{P}\Big( \sup_{k\geqslant 0}k^{\tau}\big\| R_{i\delta}^{k}\big\|\leqslant LD_{\delta}\Big)=1.
\end{align}
Consider the stochastic process $\{x_{i\delta}^{k}\}$, which evolves as:
\begin{align}\label{B-7}
x_{i\delta}^{k+1}=x_{i\delta}^{k}-\alpha^{k}\Big[ \nabla_{x_{i\delta}^{k}} g_{i}\big(x_{i\delta}^{k},\bar{x}_{\delta}^{k}\big)+R_{i\delta}^{k}+\varsigma_{i}^{k} \Big]+\gamma^{k}\iota_{i}^{k}.
\end{align}
with initial condition $x_{i\delta}^{0}=x_{i}^{0}$.
$\{x_{i\delta}^{k}\}$ is $\mathcal{H}^{k}$-adapted and the process $\{x_{i\delta}^{k}\}$ and $ \{x_{i}^{k}\}$ agree on events $\big\{\sup_{k\geqslant 0}k^{\tau}\big\| R_{i}^{k}\big\|\leqslant LD_{\delta} \big\}$, so that
\begin{align}\label{B-8}
\mathbb{P}\Big( \sup_{k\geqslant 0} \big\|  x_{i\delta}^{k}- x_{i}^{k}\big\|>0\Big)\leqslant \delta.
\end{align}
Define $\xi_{i}^{k}=R_{i\delta}^{k}+\varsigma_{i}^{k} $ for all $k\geqslant 0$ and denote by $\mathcal{F}_{\delta}^{k}$ the $\sigma$-algebra:
\begin{align}\label{B-9}
\mathcal{F}_{\delta}^{k}=\sigma\big(\bm{x_{\delta}^{0}},\bm{v_{\delta}^{0}}, L^{0},\ldots, L^{k-1},\bm{\varsigma^{0}},\ldots,\bm{\varsigma^{k-1}},\bm{ \iota^{0}},\ldots, \bm{ \iota^{k-1}}\big).
\end{align}
For all $k\geqslant 0$, $\mathcal{F}_{\delta}^{k}\subset \mathcal{H}^{k}$ holds .
By Condition (a) and \eqref{B-6},
\begin{align}\label{B-10}
\Big\| \mathbb{E}\big[(R_{i\delta}^{k}+\varsigma_{i}^{k})|\mathcal{F}_{\delta}^{k}\big]\Big\|\leqslant \Big\| \mathbb{E}\big[R_{i\delta}^{k}|\mathcal{F}_{\delta}^{k}\big]\Big\|\leqslant \frac{LD_{\delta}}{k^\tau}
\end{align}
holds almost surely, and by the parallelogram law,
\begin{align}\label{B-11}
&\mathbb{E}\Big[\big\|R_{i\delta}^{k}+\varsigma_{i}^{k}\big\|^{2}\big |\mathcal{F}_{\delta}^{k} \Big]\notag\\
\leqslant & 2\mathbb{E}\Big[\big\|R_{i\delta}^{k}\big\|^{2}\big |\mathcal{F}_{\delta}^{k} \Big]+2\mathbb{E}\Big[\big\|\varsigma_{i}^{k}\big\|^{2}\big |\mathcal{F}_{\delta}^{k}\Big]
\leqslant 2C+\frac{2L^{2}D_{\delta}^{2}}{k^{2\tau}}.
\end{align}
Therefore, for any $i$, the process $\{x_{i\delta}^{k}\}$ falls under purview of Lemma \ref{Lem:global convergence}. Specially, taking $\nu_{1}=0$, $\nu_{2}=\tau$ and letting $\mathcal{I}^{k}=\mathcal{F}_{\delta}^{k}$, Assumption \ref{Ass:global noise} in Appendix is satisfied. Therefore,
\begin{align}\label{B-12}
\lim_{k\rightarrow\infty}\mathbb{E}\big[ f_{i}(x_{i\delta}^{k},s_{i}^{k})\big]=\pi(f_{i}),
\end{align}
according to Lemma \ref{Lem:global convergence}.
By \eqref{B-8},
\begin{align}\label{B-13}
&\big\|\mathbb{E}[ f_{i}(x_{i}^{k},s_{i}^{k})]-\pi(f_{i})\big\|\notag\\
\leqslant&\mathbb{E}\big[\|f_{i}(x_{i}^{k},s_{i}^{k})-f_{i}(x_{i\delta}^{k},s_{i}^{k})\|\big]+\big\|\mathbb{E}[ f_{i}(x_{i\delta}^{k},s_{i}^{k})]-\pi(f_{i})\big\|\notag\\
\leqslant & 2\|f\|_{\infty}\delta+\big\|\mathbb{E}[ f_{i}(x_{i\delta}^{k},s_{i}^{k})]-\pi(f_{i})\big\|.
\end{align}
By \eqref{B-12}, we have
\begin{align}\label{B-14}
\lim\sup_{k\rightarrow\infty}\big\|\mathbb{E}\big[ f_{i}(x_{i}^{k},s_{i}^{k})\big]-\pi(f_{i})\big\|\leqslant 2\|f\|_{\infty}\delta
\end{align}
holds for any $\delta>0$.
Therefore,
\begin{align}\label{B-15}
\lim_{k\rightarrow\infty}\Big\|\mathbb{E}\big[ f_{i}(x_{i}^{k},s_{i}^{k})\big]-\pi(f_{i})\Big\|=0,
\end{align}
which completes the proof.
\end{proof}
Theorem \ref{The:Convergence} shows the weak convergence of Algorithm \ref{Alg1} to the social optimum of aggregative games.

\section{Simulation}\label{sec5}

In this section, we provide an example of flexible electric vehicle charging control \cite{liu2020approximate}, whose convex version is studied in \cite{jacquot2017demand}.

To be specific, consider there are many residents commute by cars every day in a neighborhood and the cost is different for each resident due to his  occupation with respect to hours of the day. Player $i$'s electricity bill is defined by
 $$b_{i}(x_{i})=\frac{a_i}{1+\exp{-(x_i-b_i)}}+c_i \log(1+(x_i-d_i)^2),$$
which is a non-convex function with respect to hours $x_{i}$ over the day. $a_i$ and $c_i$ are independently and uniformly distributed random variables over $[5,40]$ and $d_i$ is given constant represent the optimal departure time for different resident $i$. Resident $i$'s cost is then defined by $$g_i(x_i,\bar{x})=b_{i}(x_{i})+\lambda_i(x_i-\bar{x})^2, $$
where $\lambda_i$ indicates his sensitivity to the deviation from public preference $\bar{x}$.
Specifically, we take $d=(7,7,8,8,9,9,13,19,19,22)^\top$, $b=(7,7.4,7.8,8.2,8.6,9,9.4,9.8,10.2,10.6)^{\top}$ and $\lambda_i$ as a random value in $(0,2)$ which indicates resident $i$'s sensitivity to the deviation from average departure time.

The communication topology between agents  is performed over an Erd\H{o}s-R\'{e}nyi random graph. Consider a graph set $\mathcal{G}$ containing 50 graphs, each of which is generated according to the E-R graph $G(10,p)$, where the probability $p$ is
selected independently and uniformly over $[0.1,0.2]$. At each iteration, a graph is randomly selected from the graph set $\mathcal{G}$.

 Therefore, the social optimum seeking problem is given as follows:
\begin{align*}
    \min_{x_i}\; &G(x) = \sum_{i=1}^{10} g_i(x_i,\bar{x})\\
    & \bar{x}=\frac{x_1+\cdots+x_{10}}{10}
\end{align*}

Define the noise sequences $\{\varsigma^k_i\}_{k \geqslant 0}$ as independently and uniformly distributed random variables over $[-5,5]$, $\{\iota_i^k\}_{k \geqslant 0}$ as  i.i.d. random variables with Gaussian distribution $N(0,1)$, and the step size $\{\alpha^k\}_{k \geqslant 0}$,$\{\beta^k\}_{k \geqslant 0}$,$\{\gamma^k\}_{k \geqslant 0}$ as given in Algorithm \ref{Alg1}.

By randomly selecting the initial positions of $x_{i}^{k}$'s, performing the distributed annealing algorithm gives rise to evolutions
of all $s_{i}^{k}$, $\bar{x}^{k}$ and $x_{i}^{k}$'s in Figs. \ref{better_21}-\ref{better_33}. Figure \ref{better_21} shows the trajectories of $(k+1)^{\tau}(s_i^k-\bar{x}^k)$ of each resident which validates the effectiveness of Theorem \ref{The:consensus}. Fig. \ref{better_22} gives the evolutions of  $s_{i}^{k}$ and $\bar{x}^{k}$, showing that the evolutions $s_{i}^{k}$s converge to $\bar{x}$, which is the network-averaged process.  Figure \ref{better_23} and Figure \ref{better_33} show two different weak convergence results which $x_i^k$ converges stationarily. In addition, we compare our algorithm with DAAG given in \cite{koshal2016distributed}. The evolutions of $x_{i}^{k}$ are provided for DAAG in Fig. \ref{better_26} and the comparisons of $\sum g_{i}(x_{i},\bar{x})$ between our algorithm and DAAG are presented in Fig.\ref{better_27}. Fig.\ref{better_27} shows that $\sum g_{i}(x_{i},\bar{x})$ for our algorithm is much smaller that for DAAG, which provide weak convergence results for seeking  social optimum. Therefore, the
simulation results support the theoretical results.

\begin{figure}[htbp]
  \centering
  \includegraphics[width=1.0\linewidth]{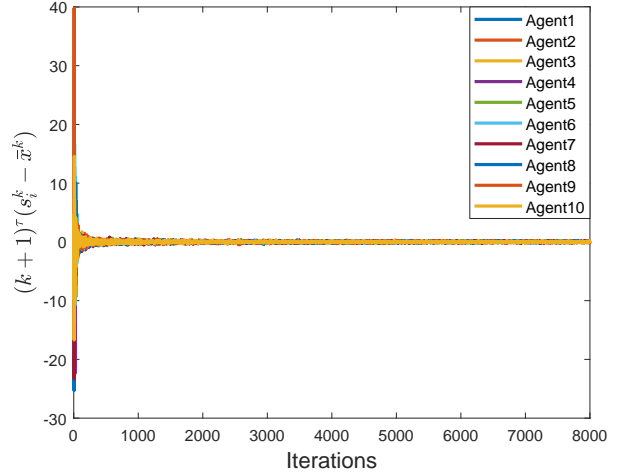}
  \caption{The trajectories of $(k+1)^{\tau}(s_i^k-\bar{x}^k)$ of each agent.}\label{better_21}
\end{figure}
\begin{figure}[htbp]
  \centering
  \includegraphics[width=1.0\linewidth]{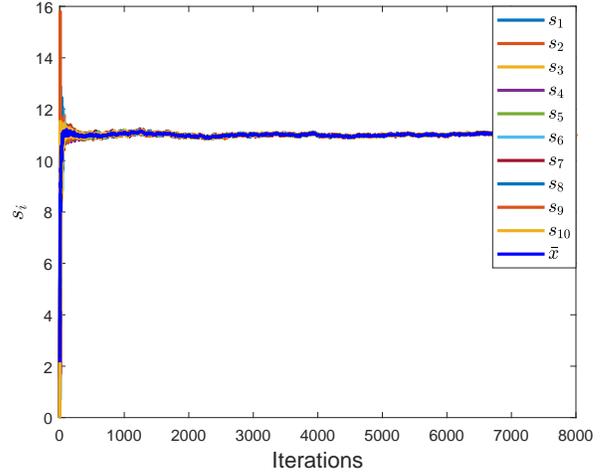}
  \caption{The trajectories of $\bar{x}^k$ and $s_i^k$ of each agent}\label{better_22}
\end{figure}
\begin{figure}[htbp]
  \centering
  \includegraphics[width=1.0\linewidth]{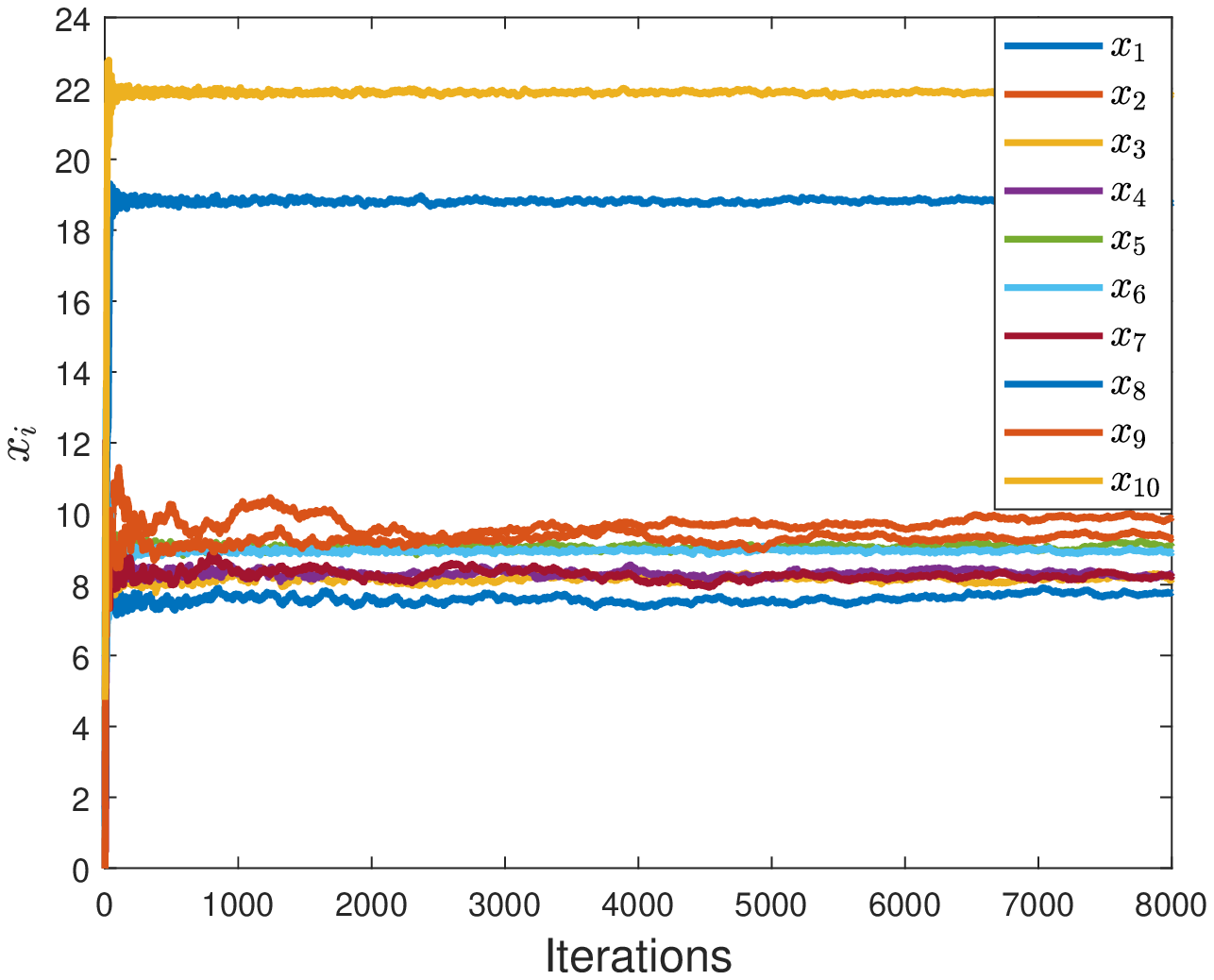}
  \caption{The trajectories of $x_i^k$ of each agent by DAA}\label{better_23}
\end{figure}
\begin{figure}[htbp]
  \centering
  \includegraphics[width=1.0\linewidth]{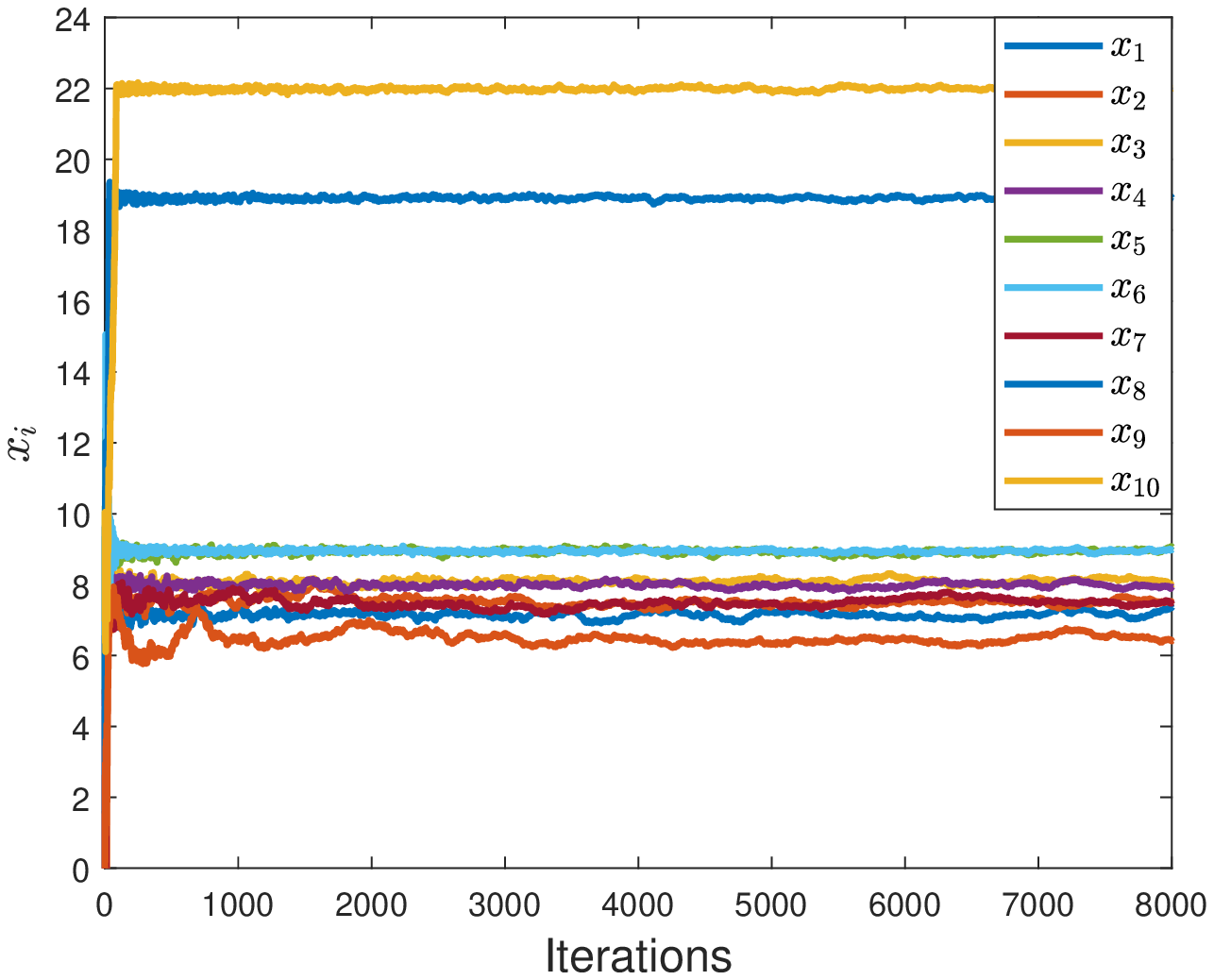}
  \caption{The trajectories of $x_i^k$ of each agent by DAA}\label{better_33}
\end{figure}
\begin{figure}[htbp]
  \centering
  \includegraphics[width=1.0\linewidth]{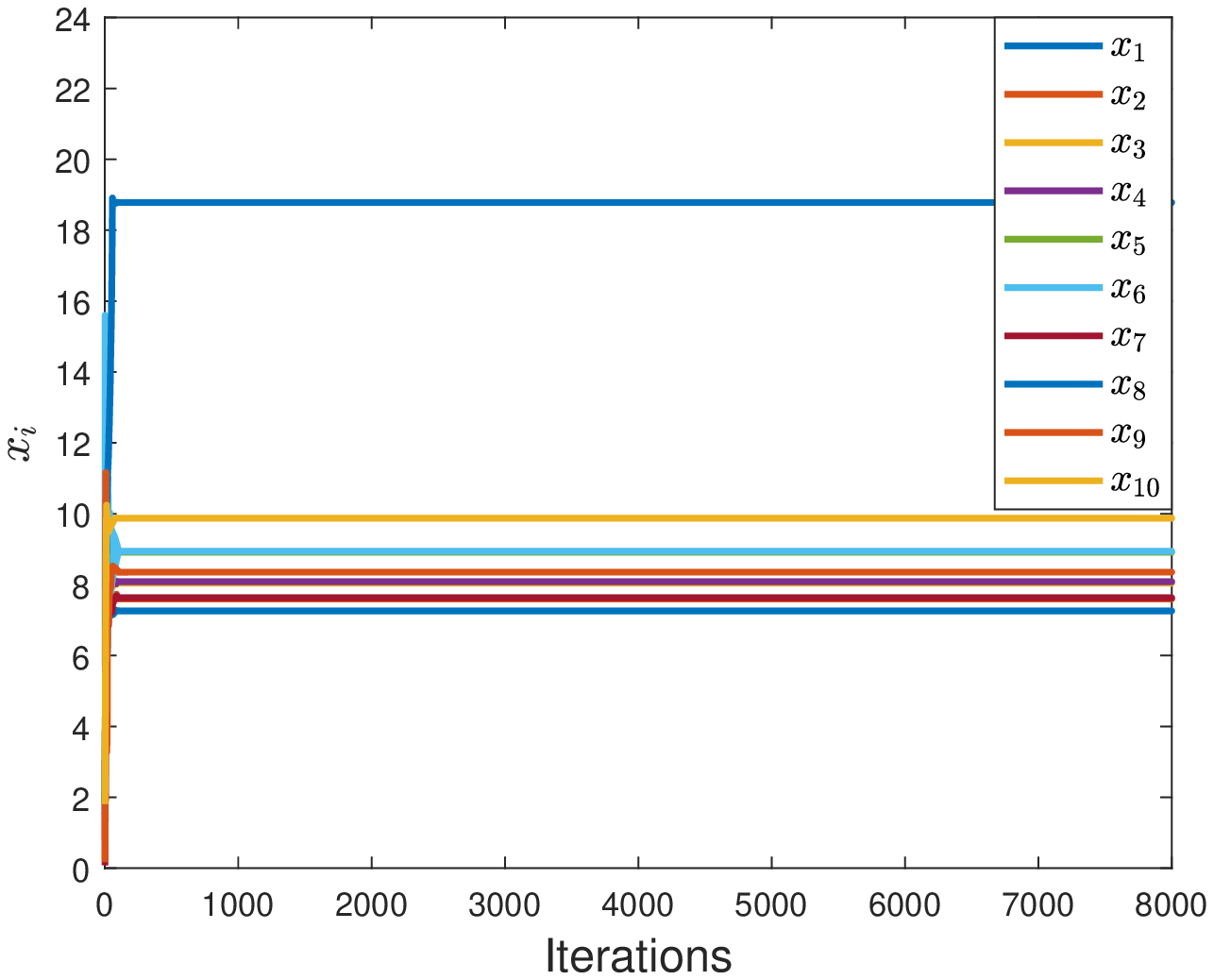}
  \caption{The trajectories of $x_i^k$ of each agent by DAAG}\label{better_26}
\end{figure}
\begin{figure}[htbp]
  \centering
  \includegraphics[width=1.0\linewidth]{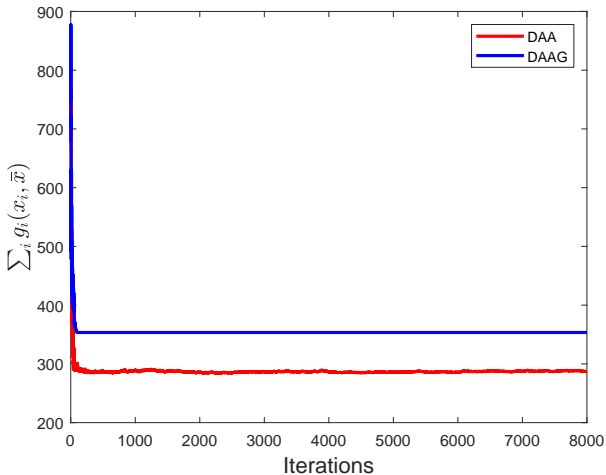}
  \caption{Changes in the values of the global objective function}\label{better_27}
\end{figure}
\section{Conclusion}\label{sec6}
In this paper,  the seeking of social optimum of cooperative aggregative games was studied. A distributed annealing algorithm was designed for seeking the social optimum.  Moreover, the weak convergence to the social optimum of the proposed algorithm was given. Finally, an example was given to the proposed algorithm to verify its effectiveness.

\appendices
\section{Centralized annealing Algorithm}
In the Appendix, we briefly review classical results \cite{gelfand1991recursive} that are used in the weak convergence proof.

Consider the following optimization problem:
\begin{align}
\min_{z\in \mathcal{R}^{d}} G(z)
\end{align}
where $G:\mathcal{R}^{d}\rightarrow \mathcal{R}_+$.
Construct the following stochastic recursion algorithm in $\mathcal{R}^{d}$:
\begin{align}\label{eq:global algorithm}
z^{k+1}=z^{k}-\alpha^{k}\Big(\nabla G\big(z^{k}\big)+\xi^{k}\Big)+\gamma^{k}w^{k},\quad k\geqslant 0,
\end{align}
where $G:\mathcal{R}^{d}\rightarrow \mathcal{R}_{+}$, $\{\xi^{k}\}$ is a sequence of $\mathcal{R}^{d}$-valued random variables, $\{w^{k}\}$ is a sequence of $\mathcal{R}^{d}$-valued i.i.d. Gaussian random variables with zero mean and covariance $I_{d}$. Further, we assume that
\begin{align*}
\alpha^{k}=\frac{c_{\alpha}}{k},  \;\mbox{and}\; \gamma^{k}=\frac{c_{\gamma}}{k^\frac{1}{2}\sqrt{\log\log k}}\quad \mbox{for large}\; k,
\end{align*}
where $c_{\alpha}$ and $c_{\gamma}$ are constants.
Next, consider the following assumptions on $G(\cdot)$, the gradient fields $\nabla G(\cdot)$ and noise $\{\xi^{k}\}$:

\begin{assumption}\label{Ass:global function}
$G:\mathcal{R}^{d}\rightarrow \mathcal{R}_+$ is a twice differentiable function such that
\begin{itemize}
\item[(a)] $\min\limits_z G(z)=0$.
\item[(b)] $\lim_{\|z\|\rightarrow\infty} G(z)=\infty$ and $\lim_{\|z\|\rightarrow\infty} \|\nabla G(z)\|=\infty$.
\item[(c)] $\inf \big(\|\nabla G(z)\|^{2}-\bigtriangleup G(z) \big)>-\infty$.
\item[(d)]For $\epsilon>0$, let
\begin{align*}
d\pi^{\epsilon}(z)&=\frac{1}{Z^{\epsilon}}\exp\Big(-\frac{2G(z)}{\epsilon^{2}}\Big)dz, \notag\\ Z^{\epsilon}\quad &=\int \exp\Big( \frac{-2G(z)}{\epsilon^{2}}\Big)dz.
\end{align*}
$G$ satisfies that $\pi^{\epsilon}$ has a weak limit $\pi$ as $\epsilon\rightarrow 0$.
\item[(e)]$\lim\inf_{\|z\|\rightarrow \infty}\Big\langle \frac{\nabla G(z)}{\|\nabla G(z)\|},\frac{z}{\|z\|}\Big\rangle\geqslant C(d)$, where $C(d)=\Big(\frac{4d-4}{4d-3}\Big)^{\frac{1}{2}}$.
\item[(f)]$\lim\inf_{\|z\|\rightarrow \infty}\frac{\|\nabla G(z)\|}{\|z\|}>0$.
\item[(g)]$\lim\sup_{\|z\|\rightarrow \infty}\frac{\|\nabla G(z)\|}{\|z\|}<\infty$.
\end{itemize}
\end{assumption}
Let $\mathcal{I}^{k}$ be a filtration generated by \eqref{eq:global algorithm}:
\begin{align}\label{eq:Global sigma algebra}
\mathcal{I}^{k}=\sigma\big( \{z^{0},\xi^{1},\ldots,\xi^{k-1},w^{1},\ldots,w^{k-1}\}\big).
\end{align}

\begin{assumption}\label{Ass:global noise}
There exists a constant $K_{1}>0$ such that
$\mathbb{E}\big(\big\|\xi^{k}\big\|^{2}\big| \mathcal{I}^{k}\big)\leqslant K_{1}\big(\alpha^{k})^{\nu_{1}}$, and
$\big\|\mathbb{E}\big(\xi^{k}\big |\mathcal{I}^{k}\big)\big\|\leqslant K_{1}\big(\alpha^{k}\big)^{\nu_{2}}$, a.s. with $\nu_{1}>-1$ and $\nu_{2}>0$.
\end{assumption}

The following results of Algorithm \eqref{eq:global algorithm} was obtained in \cite{gelfand1991recursive}:
\begin{lemma}\cite[Theorem 4]{gelfand1991recursive}\label{Lem:global convergence} Let Assumptions \ref{Ass:global function}-\ref{Ass:global noise} hold and assume $c_{\alpha}$ and $c_{\gamma}$ in \eqref{eq:global algorithm} satisfy
$\frac{c_{\gamma}^{2}}{c_{\alpha}}>K_{0}$. Then for any bounded continuous function $f:\mathcal{R}^{d}\rightarrow \mathcal{R}$, we have
\begin{align*}
\lim_{k\rightarrow\infty} \mathbb{E}_{0,z^{0}}\big[f(z^{k})\big]=\pi(f).
\end{align*}
\end{lemma}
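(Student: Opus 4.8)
The plan is to establish this classical centralized result by embedding the discrete recursion \eqref{eq:global algorithm} into a continuous-time time-inhomogeneous Langevin diffusion and then invoking the theory of simulated annealing with a logarithmic cooling schedule. First I would introduce the accumulated-step time scale $t_{k}=\sum_{j=1}^{k}\alpha^{j}=c_{\alpha}\log k+O(1)$ and define the piecewise-constant interpolation $Z(t_{k})=z^{k}$. Under this time change the increment $-\alpha^{k}\nabla G(z^{k})$ becomes the increment of $-\nabla G$ over one unit of rescaled time, while the injected noise $\gamma^{k}w^{k}$ contributes an effective diffusion coefficient $(\gamma^{k})^{2}/\alpha^{k}=c_{\gamma}^{2}/(c_{\alpha}\log\log k)$. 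Since $\log\log k=\log(t_{k}/c_{\alpha})+o(1)\sim\log t_{k}$, the interpolated process is formally governed by the overdamped Langevin diffusion
\begin{align*}
dZ(t)=-\nabla G\big(Z(t)\big)\,dt+\sqrt{T(t)}\,dW(t),\qquad T(t)=\frac{c_{\gamma}^{2}}{c_{\alpha}\log t},
\end{align*}
whose temperature $T(t)$ decays on the critical logarithmic scale.

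Second, I would make this embedding rigorous by a martingale-problem argument. Writing $z^{k+1}-z^{k}=-\alpha^{k}\nabla G(z^{k})-\alpha^{k}\xi^{k}+\gamma^{k}w^{k}$, the conditional mean and conditional covariance of the increment over a rescaled time window must match the generator $\mathcal{L}_{t}=-\nabla G\cdot\nabla+\tfrac{T(t)}{2}\Delta$ in the limit. Here Assumption \ref{Ass:global noise} does the work: the bias bound $\|\mathbb{E}[\xi^{k}\mid\mathcal{I}^{k}]\|\leqslant K_{1}(\alpha^{k})^{\nu_{2}}$ with $\nu_{2}>0$ makes the accumulated drift error from $\xi^{k}$ of lower order than the $-\alpha^{k}\nabla G$ term, and the variance bound $\mathbb{E}[\|\xi^{k}\|^{2}\mid\mathcal{I}^{k}]\leqslant K_{1}(\alpha^{k})^{\nu_{1}}$ with $\nu_{1}>-1$ controls its fluctuation so that, after multiplication by $(\alpha^{k})^{2}$, its contribution to the quadratic variation is negligible relative to $(\gamma^{k})^{2}$. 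Tightness of the interpolated family would follow from the coercivity and growth bounds in Assumption \ref{Ass:global function}(b),(f),(g), which also guarantee non-explosion of the limit diffusion; combining tightness with the matched local characteristics identifies every weak limit point as the law of $Z(\cdot)$.

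Third, the analytic core is to show $\mathrm{Law}(Z(t))\Rightarrow\pi$ as $t\to\infty$. The instantaneous invariant (Gibbs) measure of $\mathcal{L}_{t}$ is exactly $\pi^{\epsilon}$ with $\epsilon^{2}=T(t)$, and Assumption \ref{Ass:global function}(d) supplies its weak limit $\pi$, concentrated on the global minima via (a). The key estimate is that the spectral gap of the Langevin generator at temperature $T$ is of order $\exp(-K_{0}/T)$, where $K_{0}$ is the largest well depth that a non-global critical region must be climbed out of; conditions (c),(e),(f),(g) are precisely the coercivity and radial-gradient hypotheses, with the sharp constant $C(d)=((4d-4)/(4d-3))^{1/2}$, that make this gap estimate and the associated Poincar\'e inequality valid on all of $\mathcal{R}^{d}$. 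Feeding $T(t)=c_{\gamma}^{2}/(c_{\alpha}\log t)$ into the gap yields a relaxation rate $\sim t^{-K_{0}c_{\alpha}/c_{\gamma}^{2}}$, which is non-integrable, so that a Gronwall-type estimate on the $L^{2}(\pi^{T(t)})$ distance between $\mathrm{Law}(Z(t))$ and $\pi^{T(t)}$ forces this distance to zero precisely when $c_{\gamma}^{2}/c_{\alpha}>K_{0}$, the stated condition; since $\pi^{T(t)}\Rightarrow\pi$, the triangle inequality gives $\mathrm{Law}(Z(t))\Rightarrow\pi$, and transferring back through the interpolation yields $\lim_{k}\mathbb{E}_{0,z^{0}}[f(z^{k})]=\pi(f)$ for bounded continuous $f$.

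The hard part will be the third step: making the temperature-dependent spectral-gap estimate rigorous and coupling it to the cooling schedule, since this is where the annealing phenomenon lives and where the exact threshold $c_{\gamma}^{2}/c_{\alpha}>K_{0}$ is determined. The delicate point is that the gap degenerates exponentially as $T\to0$, so one must verify that conditions \ref{Ass:global function}(c),(e) furnish a global Lyapunov and Poincar\'e structure strong enough that the cooling is slow relative to the vanishing gap yet still forces $T(t)\to0$. The discrete-to-continuous approximation of the first two steps, while technical, is comparatively routine once the moment bounds of Assumption \ref{Ass:global noise} are in hand.
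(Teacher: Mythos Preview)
The paper does not supply its own proof of this lemma; it is stated in the Appendix purely as a citation of Theorem~4 in \cite{gelfand1991recursive} and is used as a black box in the proof of Theorem~\ref{The:Convergence}. There is therefore nothing in the paper to compare your attempt against.

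That said, your sketch is a faithful outline of the original Gelfand--Mitter argument underlying the cited theorem: the interpolation onto the accumulated-step time scale $t_{k}=\sum_{j\leqslant k}\alpha^{j}$, the identification of the limit process with a time-inhomogeneous Langevin diffusion at temperature $T(t)\sim c_{\gamma}^{2}/(c_{\alpha}\log t)$, the use of Assumption~\ref{Ass:global noise} to kill the $\xi^{k}$ contribution in both drift and quadratic variation, and the spectral-gap/Poincar\'e analysis that produces the threshold $c_{\gamma}^{2}/c_{\alpha}>K_{0}$. Since the paper merely imports this result rather than reproving it, your proposal is consistent with the cited source but goes well beyond what the present paper actually contains.
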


\bibliographystyle{IEEEtran}
\bibliography{mybibfile}

\end{document}